\newtheorem{thm}{Theorem}[section]
\newtheorem{lem}[thm]{Lemma}
\newtheorem{asm}{Assumption}
\theoremstyle{remark}
\newtheorem{rem}[thm]{Remark}
\theoremstyle{definition}
\newtheorem{defn}{Definition}
\newcommand{\ra}{\rightarrow}
\newcommand{\N}{\mathbb N}     
\newcommand{\R}{\mathbb R}     
\newcommand{\Z}{\mathbb Z}     
\renewcommand{\a}{\alpha}
\renewcommand{\d}{\delta}
\newcommand{\e}{\varepsilon}
\renewcommand{\l}{\lambda}
\newcommand{\s}{\sigma}
\renewcommand{\k}{\kappa}
\newcommand{\bigo}{\mathcal{O}}
\newcommand{\fl}[1]{\lfloor #1 \rfloor}  
\newcommand{\cl}[1]{\lceil #1 \rceil}    
\newcommand{\ind}[1]{ \mathbf{1}_{ \{ #1 \} } } 
\newcommand{\be}{\begin{equation}}
\newcommand{\ee}{\end{equation}}
\newcommand{\w}{\omega}              
\renewcommand{\P}{\mathbb{P}}        
\newcommand{\E}{\mathbb{E}}          
\newcommand{\vp}{\mathrm{v}_0}       
\DeclareMathOperator{\Var}{Var}     
\def\Pv{\mathbf{P}}  \def\Ev{\mathbf{E}}
\newcommand{\br}{\bar{r}}
\newcommand{\Ctr}{C_4}
\newcommand{\Cx}{C_0}
\newcommand{\Cza}{C_3}
\newcommand{\Czb}{C_z}
\newcommand{\Ch}{C_5}
\begin{document}

\title[Extreme slowdowns]{Extreme slowdowns for one-dimensional excited random walks}
\author{Jonathon Peterson}
\address{Jonathon Peterson \\  Purdue University \\ Department of Mathematics \\ 150 N University Street \\ West Lafayette, IN 47907 \\ USA}
\email{peterson@math.purdue.edu}
\urladdr{http://www.math.purdue.edu/~peterson}
\thanks{J. Peterson was partially supported by NSA grant H98230-13-1-0266.}

\subjclass[2010]{Primary: 60K35; Secondary: 60F10, 60J15, 60K37}
\keywords{excited random walk, large deviations}

\date{\today}


\begin{abstract}
 We study the asymptotics of the probabilities of extreme slowdown events for transient one-dimensional excited random walks. 
That is, if $\{X_n\}_{n\geq 0}$ is a transient one-dimensional excited random walk and $T_n = \min\{ k: \, X_k = n\}$, 
we study the asymptotics of probabilities of the form $P(X_n \leq n^\gamma)$ and $P(T_{n^\gamma} \geq n )$ with $\gamma < 1$. 
We show that there is an interesting change in the rate of decay of these extreme slowdown probabilities when $\gamma < 1/2$. 
\end{abstract}

\maketitle

\section{Introduction and main results}

Excited random walks are a model of self-interacting random walks where the the transition probabilities are a function of the local time of the random walk at the current location. 
The model of excited random walks was first introduced by Benjamini and Wilson in \cite{bwERW}, but has since been generalized by Zerner \cite{zMERW} and more recently by Kosygina and Zerner \cite{kzPNERW}. 
For the case of one-dimensional excited random walks, the model can be described as follows. 
A \emph{cookie environment} is an element $\w = \{ \w(x,j) \}_{x\in \Z, \, j\geq 1} \in [0,1]^{\Z\times \N} =: \Omega$.  
For a fixed cookie environment $\w$ we can define a self-interacting random walk on $\Z$ so that on the $j$-th visit of the random walk to the site $x$, the random walk steps to the right with probability $\w(x,j)$ and to the left with probability $1-\w(x,j)$. That is, $\{X_n\}_{n\geq 0}$ is a stochastic process with law $P_{\w}$ such that
\begin{align*}
  P_{\w}( X_{n+1}  = X_n + 1 \, | \, \mathcal{F}_n ) 
& = 1 -  P_{\w}( X_{n+1}  = X_n - 1 \, | \, \mathcal{F}_n ) \\
&= \w(X_n, \#\{k\leq n: X_k = X_n\} ), 
\end{align*}
where $\mathcal{F}_n = \s(X_0,X_1,\ldots,X_n)$. 
One can start the excited random walk at any $x\in \Z$, but in this paper we will always start the excited random walks at $X_0 = 0$.

We will allow the cookie environments to be random, chosen from a distribution $\P$ on the space $\Omega$ of cookie environments (equipped with the standard product topology). 
The distribution $P_\w$ of the random walk in a fixed cookie environment is the \emph{quenched} law of the random walk. Since the environment $\w$ is random, $P_\w$ is a conditional probability distribution, and the \emph{averaged} law $P$ of the excited random walk is defined by averaging the quenched law over all environments. That is, 
$P(\cdot) = \E\left[ P_\w(\cdot) \right]$, where $\E$ denotes expectation with respect to the distribution $\P$ on environments.


The terminology ``cookie environment'' is traced back to Zerner's paper \cite{zMERW} where he envisioned a stack of ``cookies'' at each site. Upon each visit to a site, the random walker eats a cookie (removing it from the stack) and the cookie induces a specific drift on the random walker\footnote{For this reason excited random walks are also sometimes called ``cookie random walks.''}.
Most of the results for one-dimensional excited random walks are under the assumption of a bounded number of cookies per site and i.i.d.\ stacks of cookies. 
More specifically, we will assume the following. 
\begin{asm}\label{asmM}
There exists an $M<\infty$ such that $\P(\w \in \Omega_M) = 1$, where 
\[
 \Omega_M = \{ \w \in \Omega: \, \w(x,j) = 1/2 \text{ for all } x\in \Z, \, j > M \}.
\]
\end{asm}

\begin{asm}\label{asmiid}
 The distribution $\P$ on cookie environments $\w$ is such that $\{\w(x,\cdot)\}_{x \in \Z}$ is i.i.d.\ under the measure $\P$. 
\end{asm}
\noindent Assumption \ref{asmM} is said to be the assumption of $M$ cookies per site because one imagines that after the $M$ cookies at a site have been removed, upon further returns to that site there are no cookies to ``excite'' the walk and so the walk moves as a simple symmetric random walk. Note that $\Omega_M$ is obviously isomorphic to the space $[0,1]^{\Z \times M}$. 

In addition to the above assumptions on the cookie environments, we will also need the following non-degeneracy assumption on the cookie environments. 
\begin{asm}\label{asmnd}
 The distribution $\P$ on cookie environments is such that 
\[
 \E\left[ \prod_{j=1}^M \w(0,j) \right] > 0, \quad \text{and} \quad \E\left[ \prod_{j=1}^M (1-\w(0,j)) \right] > 0. 
\]
\end{asm}

\subsection{Previous results}
Under Assumptions \ref{asmM}--\ref{asmnd} a great deal is known about the behavior of excited random walks. 
Remarkably, much of what is known can be characterized by a single parameter
\[
 \d = \d(\P) = \E\left[ \sum_{j = 1}^\infty (2 \w(0,j) - 1) \right].
\]
Since $2\w(0,j)-1$ is the expected displacement of the random walk in the step following the $j$-th visit to the origin, the parameter $\d$ is the ``average drift per site of the cookie environment.''
A brief summary of some of the ways in which the parameter $\d$ characterizes the behavior of the excited random walk is as follows. 

\textbf{Recurrence/transience.} The excited random walk is recurrent if $\d \in [-1,1]$, transient to the right if $\d>1$, and transient to the left if $\d<-1$ \cite{zMERW,kzPNERW}. 

\textbf{Law of large numbers/ballisticity.} There exists a constant $\vp$ such that 
\[
 \lim_{n\ra\infty} \frac{X_n}{n} = \vp, \quad P-a.s.
\]
Moreover, $\vp = 0$ if and only if $\d \in [-2,2]$ \cite{zMERW,bsCRWspeed,kzPNERW}. 

\textbf{Limiting distributions.} 
For any $\a\in(0,2]$ and $b>0$ let $\mathcal{Z}_{\a,b}$ denote an $\a$-stable random variable with characteristic exponent
\[
 \log E\left[ e^{iu \mathcal{Z}_{\a,b} } \right] = 
\begin{cases}
 -b|u|^\a\left( 1 - i \tan(\frac{\pi\a}{2})\frac{u}{|u|} \right) & \a \in (0,1) \cup (1,2] \\
 -b|u| \left( 1 + \frac{2i}{\pi}\frac{u}{|u|}\log|u|  \right) & \a = 1, 
\end{cases}
\qquad \forall u \in \R. 
\]
(Note that $\mathcal{Z}_{2,b}$ is a Gaussian random variable with mean $0$ and variance $2b$.)
If $\d>1$ so that the excited random walk is transient to the right, then the following limiting distributions are known \cite{bsRGCRW,kmLLCRW,kzPNERW}.  
\begin{align}
 &\text{If } \d \in (1,2), \text{ then}  &   &\frac{X_n}{n^{\d/2}} \xRightarrow[n\ra\infty]{} \left(\mathcal{Z}_{\d/2,b}\right)^{-\d/2} & \text{for some } b>0. \label{ERWld1} \\
&\text{If } \d \in (2,4), \text{ then} & & \frac{X_n - n \vp}{n^{2/\d}} \xRightarrow[n\ra\infty]{} \mathcal{Z}_{\d/2,b} & \text{for some } b>0. \label{ERWld2} \\
&\text{If } \d > 4, \text{ then} &    &\frac{X_n - n \vp}{\sqrt{n}} \xRightarrow[n\ra\infty]{} \mathcal{Z}_{2,b} & \text{for some } b>0. \label{ERWld3}
\end{align}
Note that the parameter $\d$ characterizes both the scaling needed and the type of the distribution in the limit. 
The limiting distributions in the borderline cases $\d \in \{2,4\}$ \cite{kmLLCRW,kzERWsurvey} and for recurrent ERW \cite{dCLTRERW,dkSLRERW} are also known. 
In these cases as well the parameter $\d$ determines both the scaling needed and the type of the limiting distribution. 

\subsection{Main results - asymptotics of extreme slowdown probabilities}

The results mentioned above show that for excited random walks that are transient to the right ($\d>1$), the polynomial rate of growth of $X_n$ is $n^{1 \wedge (\d/2)}$ (in the case $\d=2$ there is a logarithmic correction for the true rate of growth; that is, $(X_n \log n)/n$ converges in probability to a constant \cite{bsRGCRW,kmLLCRW}). 
In this paper we will be interested in the probability that the excited random walk grows at a much slower rate than this.

Large deviations of one-dimensional excited random walks were studied in \cite{pERWLDP}. 
Large deviation principles were proved both for the speed $X_n/n$ and for the hitting times $T_n/n$ where 
$T_n = \inf\{k\geq 0: X_k = n\}$. 
While large deviation probabilities of $X_n/n$ and $T_n/n$ generally decay exponentially in $n$, when the limiting speed $\vp>0$ (i.e., when $\d>2$) the probability of moving at a slower than typical positive speed decays polynomially in $n$. 
In particular, when $\d>2$ it was shown that 
\begin{equation}\label{eq:ldsd}
 \lim_{n\ra\infty} \frac{ \log P(X_n < nv) }{\log n} = \lim_{n\ra\infty} \frac{\log P( T_{n} > n/v ) }{\log n} = 1 - \frac{\d}{2}, \quad \forall v \in (0,\vp). 
\end{equation}

In this paper, we will be interested in the rate of decay of more extreme slowdown events; that is, where the random walk moves at a polynomial rate of growth that is slower than the typical rate of growth of $n^{1\wedge \d/2}$. 
To state the rates of decay we will use the following notation. For sequences $f(n)$ and $g(n)$ the notation $f(n) \sim g(n)$ as $n\ra\infty$ will mean that $f(n)/g(n) \ra 1$, while $f(n) \asymp g(n)$ will mean that the ratio $f(n)/g(n)$ is uniformly bounded away from $0$ and infinity. 
Our main result is the following. 
\begin{thm}\label{th:ERWxsd}
 Let Assumptions \ref{asmM}--\ref{asmnd} be satisfied, and let $\d > 1$. 
\begin{enumerate}
 \item \label{th:superpart} There exists a constant $A>0$ 
such that for any $\frac{1}{2} < \gamma < 1 \wedge \frac{\d}{2}$, 
\[
 P(T_{n^\gamma} > n ) \sim P( X_n < n^\gamma ) \sim A n^{\gamma - \d/2}, \quad \text{as } n\ra\infty.
\]
 \item \label{th:subpart} If $\gamma \in (0,1/2]$ then 
\[
 P(T_{n^\gamma} > n ) \asymp n^{2\gamma - \frac{1+\d}{2}}, \quad \text{and} \quad 
 P(X_n < n^\gamma ) \asymp n^{(1-\d)/2}. 
\]
\end{enumerate}
\end{thm}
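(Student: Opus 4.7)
The proof rests on the forward regeneration structure of the transient ERW. Let $\tau_1 < \tau_2 < \cdots$ denote the regeneration times and $R_k = X_{\tau_k}$ the regeneration positions; under our assumptions with $\d > 1$, the pairs $(\tau_{k+1}-\tau_k,\, R_{k+1}-R_k)_{k\geq 1}$ are i.i.d., the position increment $\eta := R_2-R_1$ has finite mean $\mu>0$, and the time increment $\xi := \tau_2-\tau_1$ has regularly varying tail $P(\xi > n) \sim c_0 n^{-\d/2}$ (a standard consequence of the Basdevant--Singh branching-process representation that underlies \eqref{ERWld1}--\eqref{ERWld3}). Writing $K_m = \min\{k : R_k \geq m\}$, the LLN gives $K_{n^\gamma}/n^\gamma \to 1/\mu$ almost surely, and one has the sandwich $\{\tau_{K_{n^\gamma}-1} > n\} \subset \{T_{n^\gamma} > n\} \subset \{\tau_{K_{n^\gamma}} > n\}$.

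For part \ref{th:superpart}, I would reduce $P(T_{n^\gamma} > n)$ to the precise one-big-jump tail of a sum of i.i.d.\ regularly-varying random variables. A Nagaev/Heyde-type estimate gives $P\bigl(\sum_{i=1}^{m_n} \xi_i > n\bigr) \sim m_n c_0 n^{-\d/2}$ whenever $n$ is large enough relative to the typical scale of the sum. Applied with $m_n = \lfloor n^\gamma/\mu\rfloor$, this produces the lower bound $P(T_{n^\gamma} > n) \geq P(\tau_{K_{n^\gamma}-1} > n) \sim A n^{\gamma - \d/2}$ with $A = c_0/\mu$. The matching upper bound requires controlling the contribution of the final regeneration block (the walk is inside but has not yet crossed $n^\gamma$); a single-block range estimate shows this contribution is $o(n^{\gamma-\d/2})$ precisely because $\gamma > 1/2$. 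The equivalence $P(X_n < n^\gamma) \sim A n^{\gamma - \d/2}$ follows by a similar argument showing the ``crossed-then-backtracked'' event $\{T_{n^\gamma} \leq n,\, X_n < n^\gamma\}$ is of lower order.

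For part \ref{th:subpart}, the subtlety is that when $\gamma \leq 1/2$, a regeneration block with $\xi_i \sim n$ typically has spatial extent $\eta_i \sim \sqrt{n} \gg n^\gamma$, so such a block already straddles level $n^\gamma$ and the walk almost certainly crosses $n^\gamma$ well before time $n$; the naive one-big-jump heuristic therefore overcounts. I would decompose $\{T_{n^\gamma} > n\}$ by the regeneration index active at time $n$ and rely on a joint conditional tail estimate of the form $P(\xi > n,\, \eta \leq m) \asymp m\, n^{-(1+\d)/2}$ for $m \ll \sqrt{n}$, obtained by analysing the walk inside a single regeneration block as an approximately diffusive random walk confined to a narrow band. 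Summing over the $\asymp n^\gamma$ candidate blocks with window size $m \asymp n^\gamma$ yields the upper bound $\asymp n^\gamma \cdot n^\gamma \cdot n^{-(1+\d)/2} = n^{2\gamma-(1+\d)/2}$; a matching lower bound comes from constructing explicit paths in which the walk performs a diffusive excursion of range $\asymp n^\gamma$ inside its first regeneration block. For $P(X_n < n^\gamma)$, the complementary contribution of paths that have already crossed $n^\gamma$ but sit below it at time $n$ (requiring only a single block of diffusive depth $\sim \sqrt{n}$) produces the $\gamma$-independent rate $n^{(1-\d)/2}$, which dominates when $\gamma < 1/2$.

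The principal obstacle is the conditional tail estimate $P(\xi > n,\, \eta \leq m)$ in part \ref{th:subpart}. Extracting the extra factor $m/\sqrt{n}$ beyond the marginal tail $P(\xi > n) \sim c_0 n^{-\d/2}$ requires a careful analysis of the ERW restricted to a single regeneration block---a near-diffusive walk between effective boundaries after the cookies inside the block have been exhausted, with a non-explicit entrance law inherited from the previous block.
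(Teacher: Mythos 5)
Your part \ref{th:superpart} sketch is essentially the paper's argument transplanted from the backward branching process to regeneration times of the walk itself: the paper sums $\asymp n^\gamma$ i.i.d.\ heavy-tailed block quantities ($S_j^V$, with $P_V(S_1^V>n)\sim C_2 n^{-\d/2}$ from Lemma \ref{lem:reght}) and applies a precise one-big-jump estimate (Lemma \ref{lem:htsums}), then passes from $T_{n^\gamma}$ to $X_n$ with the backtracking bound of \cite[Lemma 6.1]{pERWLDP}, exactly as you propose. Your route would work, but note that the precise tail $P(\xi>n)\sim c_0 n^{-\d/2}$ and $E[\eta]<\infty$ for the walk's regeneration blocks are asserted, not derived; the literature's precise tails are for the branching-process quantities, so this step would need an actual proof rather than a citation.

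Part \ref{th:subpart} contains a genuine error. By the definition of regeneration times, during block $k$ the walk satisfies $X_t\ge R_k$ (for $t\ge\tau_k$) and $X_t<R_{k+1}$ (for $t<\tau_{k+1}$), so it is confined to an interval of width $\eta_k$. Hence your event $\{\xi>n,\,\eta\le m\}$ forces the walk to spend more than $n$ steps in an interval of width at most $m$; since each site carries at most $M$ cookies, all but $O(m)$ of those steps are simple-symmetric, and the probability of such confinement is at most $C\exp(-cn/m^2)$, which for $m\asymp n^\gamma$, $\gamma<1/2$, is stretched-exponentially small --- not $\asymp m\,n^{-(1+\d)/2}$. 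The same objection destroys your lower bound: a ``diffusive excursion of range $\asymp n^\gamma$'' lasting time $n$ costs $\exp(-cn^{1-2\gamma})$, far below the target $n^{2\gamma-\frac{1+\d}{2}}$. The correct mechanism (which the paper's proof and its introduction make explicit) is that on $\{T_{n^\gamma}>n\}$ the walk is blocked only on the right: it accumulates $\asymp n^\gamma$ down-crossings at low levels (cost $\asymp n^{\gamma(1-\d)}$, Lemma \ref{PVnn}) and then backtracks to depth of order $\sqrt n$ \emph{below} its starting point, spending time $\asymp n$ in an interval of width $\asymp\sqrt n$ (cost $\asymp (n^{\gamma})^{1+\d}n^{-\frac{1+\d}{2}}$, Lemma \ref{Zsumtail}), giving the product $n^{2\gamma-\frac{1+\d}{2}}$. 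In your regeneration language, such paths have essentially no completed regenerations near level $n^\gamma$ by time $n$ (the deep dive invalidates them), so the decomposition ``one long block with $\eta\le n^\gamma$ among $\asymp n^\gamma$ i.i.d.\ blocks'' does not capture the event, and the straddling block that does occur is long without being narrow. Similarly, the $\gamma$-independent rate $n^{\frac{1-\d}{2}}$ for $P(X_n<n^\gamma)$ arises from a depth-$\sqrt n$ backtrack (probability $\asymp(\sqrt n)^{1-\d}$) followed by a diffusive sojourn of duration $n$ in a width-$\sqrt n$ window --- the paper implements this via the forward branching processes and local times in Section \ref{sec:subX} --- and this cannot be recovered from the narrow-band picture you describe.
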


\begin{rem}
 Here, and throughout the paper we will use the convention that $T_x = T_{\fl{x}}$ for $x \notin\Z$. We also will use this convention for other random variables indexed by integers when the index value given is not an integer. 
\end{rem}

\begin{rem}
 The formula for the exponent of the polynomial rate of decay for extreme slowdowns changes for both the position and the hitting times when $\gamma < 1/2$. Note, however, that in both cases the formula for the exponent is continuous since $\gamma - \d/2 = 2\gamma - (1+\d)/2 = (1-\d)/2$ when $\gamma = 1/2$. 
\end{rem}

Many of the results for transient, one-dimensional excited random walks are very similar to the corresponding results for random walks in random environments. 
While the parameter $\d$ characterizes much of the behavior of transient excited random walks, for random walks in random environments there is a parameter $\k>0$ of the distribution on the space of environments that plays a similar role. For many results in excited random walks, a similar statement is true for transient excited random walks with $\d/2$ replaced by $\k$. 
Examples of this are the limiting distributions in \eqref{ERWld1}--\eqref{ERWld3} and the large deviation slowdown asymptotics in \eqref{eq:ldsd} (see \cite{kksStable} and \cite{dpzAslowdown} for the corresponding results for RWRE). 

The asymptotics of extreme slowdown events such as $\{X_n < n^\gamma\}$ and $\{T_{n^\gamma} > n \}$ for transient one-dimensional RWRE were studied in \cite{fgpSDSU} where it was shown that (under the averaged measure) the probability of these events were $n^{\gamma-\k + o(1)}$ for any $\gamma \in (0,1 \wedge \k)$. 
The corresponding asymptotics for excited random walks in Theorem \ref{th:ERWxsd} above are similar when $\gamma \geq 1/2$ (again with $\d/2$ in place of $\k$), but are very different when $\gamma < 1/2$. 
This difference highlights an important difference in the way that extreme slowdowns occur in the excited random walks and RWRE. 
For RWRE, an extreme slowdown is typically caused by the random walk spending a long time (approximately $n$ steps) in a very short interval ($\bigo(\log n)$ in length) of the environment from which it is difficult for the random walk to escape. 
On the other hand, slowdowns in excited random walks are caused not by ``traps'' in the environment but by the self-interacting nature of the walk. 
Once the random walk visits all sites in an interval at least $M$ times, then the walk behaves as a simple symmetric random walk until it exits that interval.  
It will be seen from the proof below that extreme slowdowns occur when the random walk spends approximately $n$ steps in an interval of width of the order $ \sqrt{n}$. 
For this reason there is a change in the critical exponent of the slowdown asymptotics in Theorem \ref{th:ERWxsd} when $\gamma < 1/2$.

The outline of the paper is as follows. Section \ref{sec:BP} introduces the associated forward and backward branching process, details the relationship between these branching processes and the excited random walk, and reviews previous results for these branching processes. 
Then, in Section \ref{sec:super} we use the backward branching process to give the proof of the part \ref{th:superpart} of Theorem \ref{th:ERWxsd}.  Here we follow the approach from \cite{pERWLDP} that was used in proving the large deviation slowdown asymptotics in \eqref{eq:ldsd} but we make use of a stronger result on the tail asymptotics of sums of heavy tailed random variables to obtain not only the polynomial rate of decay but also the correct leading coefficient (see Remark \ref{rm:ldsdasym}). 
For the asymptotics of diffusive and subdiffusive slowdowns ($\gamma \in (0,1/2]$) this no longer works, and a different approach is needed. 
In Sections \ref{sec:subT} and \ref{sec:subX} we prove part \ref{th:subpart} of Theorem \ref{th:ERWxsd}. Our approach here uses both the backward branching process and the forward branching processes and requires some new probabilistic estimates for these branching processes. 

We close the introduction by detailing some conventions in notation that will be used throughout the remainder of the paper. 
First of all, since will consider a number of different stochastic processes throughout the paper it will be convenient to have a common notation for the stopping times of these processes. 
\begin{defn}
 If $\{\xi_t\}_{t \in I}$ is a stochastic process indexed by $I$ (the index set $I$ will always be either $\Z_+$ or $\R_+$), then
\[
 \s_x^\xi = \inf\{ t\in I: \, \xi_t \leq x \} \quad \text{and} \quad \tau_x^\xi = \inf\{ t \in I: \, \xi_t \geq x\}, \quad x \in \R,
\]
denote the first time for the process $\xi_t$ to be below or above the level $x$, respectively. (Note that the stopping times $\s_x^\xi$ or $\tau_x^\xi$ are equal to $+\infty$ if 
$\xi_t > x$ for all $t$ or $\xi_t < x$ for all $t$, respectively.)
\end{defn}
\noindent
Secondly, throughout the paper we will use $c,c',C,C',\ldots$ to denote arbitrary positive constants where the particular value of the constant may change from line to line. On the other hand, the numbered constants $C_0,C_1,C_2,\ldots$ will be used to denote particular constants that remain fixed throughout the paper. 

\section{Branching processes and diffusion process limits}\label{sec:BP}

One of the main tools that has been used in studying one-dimensional excited random walks is the \emph{associated branching processes with migration}. In this section, we introduce these branching processes, review how they relate to the excited random walk, and review some 
previous results for these branching processes. 

\subsection{The ``forward'' branching processes}

There are two branching processes with migration that are related to excited random walks. We will refer to these as the ``forward'' and ``backward'' braching processes, respectively. 
We will first consider the forward branching process. 

We begin by expanding the averaged measure $P(\cdot) = \E[P_\w(\cdot)]$ to include a family  
of Bernoulli random variables $\{B_{x,j}\}_{x\in \Z,j\geq 1}$ with the property that given the cookie environment $\w$ the $B_{x,j}$ are independent with $B_{x,j} \sim \text{Ber}(\w(x,j))$. 
We will use these Bernoulli random variables to construct both the path of the excited random walk and the associated forward branching processes. 
The random walk can be constructed by letting $B_{x,j}$ determine the jump of the random walk upon the $j$-th visit to a site $x$: if the random walk $X_n=x$ and $\#\{k\leq n: X_k = x \} = j$ then $X_{n+1} = X_n + 2(B_{x,j}-1)$.
The right forward branching process $\{W_i\}_{i\geq 0}$ and the left forward branching process $\{Z_i\}_{i\geq 0}$ are defined using the family of Bernoulli random variables as follows. 
Let $W_0 = w$ and $Z_0 = z$ for some fixed $w,z\geq 0$ and then for $i\geq 1$ let 
\begin{equation}\label{Wdef}
 W_i = \inf \left\{ m\geq 0: \sum_{j=1}^m (1-B_{i,j}) = W_{i-1} \right\} - W_{i-1}, 
\end{equation}
and
\begin{equation}\label{Zdef}
 Z_i = \inf \left\{m \geq 0: \sum_{j=1}^m B_{-i,j} = Z_{i-1} \right\} - Z_{i-1}. 
\end{equation}

The right and left forward branching processes constructed above are related to the excursions of the random walk away from the origin (to the right and left, respectively). 
To explain this connection we need to introduce some notation. First, let $\rho_n$ denote the time of the $n$-th return to the origin. That is, 
\[
 \rho_0 = 0, \quad \text{and} \quad \rho_n = \inf\{ k > \rho_{n-1} : \, X_k = 0 \}, \quad n\geq 1. 
\]
(Note that if the random walk is transient then eventually $\rho_n = \infty$ for all $n$ large enough). 
Also, for $x \in \Z$ and $n\geq 1$ let 
\[
 U_x^n = \# \{k \leq n: \, X_{k-1} = x, \, X_k = x+1 \} \quad \text{and}\quad D_x^n = \# \{k \leq n: \, X_{k-1} = x, \, X_k = x-1 \}. 
\]
That is, $U_x^n$ and $D_x^n$ are the number of steps to the right and left, respectively, of the random walk from the site $x$ in the first $n$ steps of the walk. 
Finally, for $n\geq 1$ let 
\begin{equation}\label{Rndef}
 R_n = \sum_{j=1}^n B_{0,j}. 
\end{equation}
Given the above notation, the following Lemma relates the forward branching processes to the excursions of the random walk from the origin. 

\begin{lem}\label{lem:fbpcorr}
If the right and left forward branching processes have the (random) initial conditions $W_0= R_n$ and $Z_0 = n-R_n$, then on the event $\{\rho_n < \infty\}$ we have that $W_i = U_i^{\rho_n}$ and $Z_i = D_{-i}^{\rho_n}$ for all $i\geq 0$. Moreover, $\{W_i\}_{i\geq 1}$ and $\{ Z_i \}_{i\geq 1}$ are conditionally independent given $W_0$ and $Z_0$. 
\end{lem}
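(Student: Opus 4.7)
The plan is to establish $W_i = U_i^{\rho_n}$ by induction on $i$, deduce $Z_i = D_{-i}^{\rho_n}$ by the mirror argument, and then read off the conditional independence from the observation that the two recursions are driven by disjoint families of Bernoulli variables. For the base case, the walker visits the origin at exactly the times $\rho_0 = 0, \rho_1, \ldots, \rho_n$, so it takes a step from $0$ at each of the first $n$ of these visits; the $j$-th such step is a right-step if and only if $B_{0,j}=1$, so $U_0^{\rho_n} = \sum_{j=1}^n B_{0,j} = R_n = W_0$. For the inductive step, fix $i\geq 1$, assume $W_{i-1} = U_{i-1}^{\rho_n}$, and let $V_i$ denote the total number of visits to site $i$ in $[0,\rho_n]$. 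Because the walk begins and ends at the origin, which lies strictly to the left of $i$, every upcrossing of the edge $\{i-1,i\}$ is matched by a downcrossing, so $D_i^{\rho_n} = U_{i-1}^{\rho_n}$. Since the step at the $j$-th visit to $i$ is a left-step precisely when $B_{i,j}=0$, this yields
\[
\sum_{j=1}^{V_i}(1-B_{i,j}) \;=\; D_i^{\rho_n} \;=\; U_{i-1}^{\rho_n}, \qquad U_i^{\rho_n} \;=\; V_i - D_i^{\rho_n} \;=\; V_i - U_{i-1}^{\rho_n}.
\]

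The main obstacle, I expect, will be to show that $V_i$ is the \emph{first} $m \geq 0$ at which the partial sum $\sum_{j=1}^m(1-B_{i,j})$ reaches the target level $U_{i-1}^{\rho_n}$, rather than merely some later value of $m$. To handle this I would argue by contradiction. If the first such index $m_0$ were strictly less than $V_i$, then the visits $m_0+1, \ldots, V_i$ would all have to be right-steps (else the total left-step count at $i$ would exceed $U_{i-1}^{\rho_n}$), so after its $V_i$-th visit to $i$ the walker would sit at $i+1$ and, by the definition of $V_i$, could not revisit $i$ before time $\rho_n$. The walker would therefore be confined to $\{i+1,i+2,\ldots\}$ for the remainder of $[0,\rho_n]$, contradicting $X_{\rho_n}=0$. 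Hence $V_i$ is indeed this first hitting index, and comparing with the recursion \eqref{Wdef} completes the induction via $W_i = V_i - W_{i-1} = V_i - U_{i-1}^{\rho_n} = U_i^{\rho_n}$. The identity $Z_i = D_{-i}^{\rho_n}$ follows from the same argument applied to the reflected walk.

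For the conditional independence, I would read off from the recursions \eqref{Wdef} and \eqref{Zdef} that $\{W_i\}_{i \geq 1}$, given $W_0$, is a deterministic function of $\{B_{i,j}\}_{i \geq 1,\, j \geq 1}$, while $\{Z_i\}_{i \geq 1}$, given $Z_0$, is a deterministic function of $\{B_{-i,j}\}_{i \geq 1,\, j \geq 1}$. Assumption \ref{asmiid} makes the cookie stacks $\{\w(x,\cdot)\}_{x \in \Z}$ i.i.d., so the induced Bernoulli families $\{B_{x,\cdot}\}_{x \geq 1}$ and $\{B_{x,\cdot}\}_{x \leq -1}$ are independent of each other and of $B_{0,\cdot}$, which together with $n$ determines $(W_0,Z_0)$. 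The required conditional independence of $\{W_i\}_{i \geq 1}$ and $\{Z_i\}_{i \geq 1}$ given $(W_0,Z_0)$ follows immediately.
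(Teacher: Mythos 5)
Your proof is correct and takes essentially the same route as the paper's: induction on $i$, the base case $U_0^{\rho_n}=R_n$, the upcrossing--downcrossing matching $D_i^{\rho_n}=U_{i-1}^{\rho_n}$ together with the observation that the walk's last departure from site $i$ before $\rho_n$ must be a left step (which is exactly what identifies the total visit count with the infimum in \eqref{Wdef}), and the disjointness of the Bernoulli families $\{B_{x,\cdot}\}_{x\geq 1}$, $\{B_{x,\cdot}\}_{x\leq -1}$, $B_{0,\cdot}$ under Assumption \ref{asmiid} for the conditional independence. The only quibble is cosmetic: your local symbol $V_i$ for the number of visits to site $i$ collides with the paper's notation for the backward branching process, so it should be renamed.
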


\begin{proof}
It is easy to check that on the event $\{\rho_n < \infty\}$ the definition of $R_n$ implies that $U_0^{\rho_n} = R_n$. We will prove that $U_i^{\rho_n} = W_i$ for all $i\geq 0$ by induction. 
If $U_{i-1}^{\rho_n} = W_{i-1}$, then since $\rho_n < \infty$ all $W_{i-1}$ steps from $i-1$ to $i$ will later be followed by a corresponding down step from $i$ to $i-1$. Thus, the last step from site $i$ before time $\rho_n$ will be the $W_{i-1}$-th step from $i$ to $i-1$. On the other hand, it is easily seen from the construction of the process in \eqref{Wdef} that $W_i$ is equal to the number of steps from $i$ to $i+1$ before the $W_{i-1}$-th step from $i$ to $i-1$. This completes the proof that $U_i^{\rho_n} = W_i$ for all $i\geq 0$. The proof that $D_{-i}^{\rho_n} = Z_i$ for all $i\geq 0$ is similar. The conditional independence of $\{W_i\}_{i\geq 1}$ and $\{Z_i\}_{i\geq 1}$ follows easily from Assumption \ref{asmiid} and the fact that the Bernoulli random variables $\{B_{x,j}\}_{x\in\Z,j\geq 1}$ are conditionally independent given $\w$. 
\end{proof}

\begin{rem}\label{rem:fbpcorr}
The above identification of the forward branching processes 
with the first $n$ excursions of the random walk from the origin 
has the following useful consequences. (As in Lemma \ref{lem:fbpcorr} the following equalities hold on the event $\{\rho_n < \infty\}$ with the initial conditions $W_0=R_n$ and $Z_0 = n-R_n$ for the forward branching processes.)
\begin{enumerate}[1.]
 \item \textbf{Local times:} Let $L_n(x) = \# \{k < n: \, X_k = x\}$ be the local time process of the random walk. Then, 
\[
 L_{\rho_n}(x) = U_x^{\rho_n} + D_x^{\rho_n} 
= \begin{cases}
   W_0 + Z_0 & \text{ if } x = 0\\
   W_x + W_{x-1} & \text{ if } x > 0\\
   Z_{-x} + Z_{-x+1} & \text{ if } x < 0. 
  \end{cases}
\]
The second equality follows from the fact that $D_x^{\rho_n} = U_{x-1}^{\rho_n}$ for $x > 0$ and $U_x^{\rho_n} = D_{x+1}^{\rho_n}$ for $x<0$ on the event $\{\rho_n<\infty \}$. 
 \item \textbf{Range:} The range of the random walk during the first $n$ excursions from the origin is related to the ``lifetime'' of the forward branching processes. That is, 
\[
 \max_{k\leq \rho_n} X_k = \s_0^{W} \quad \text{and}\quad \min_{k\leq \rho_n} X_k = - \s_0^Z. 
\]
\end{enumerate}
\end{rem}

It follows from the fact that the cookie environment is i.i.d.\ (Assumption \ref{asmiid}) that the forward branching processes $\{W_i\}_{i\geq 0}$ and $\{Z_i\}_{i\geq 0}$ as constructed above are Markov chains.
While they were constructed by expanding the averaged measure $P$ for the excited random walk, for simplicity of notation 
we will let $P_W^w(\cdot)$ and $P_Z^z(\cdot)$ denote the marginal laws of the right and left forward branching processes starting with $W_0 = w$ and $Z_0 = z$, respectively. 
It is important to note that the forward branching processes are absorbing at $0$; that is, $W_i = 0$ for all $i \geq \s^W_0$ and $Z_i = 0$ for all $i\geq \s^Z_0$. 

We conclude this introduction to the forward branching processes with an explanation of why these are called ``branching processes.'' 
If the cookie environment $\w$ were such that $\w(x,j) \equiv p \in (0,1)$ (corresponding to a simple random walk), then the processes $\{W_i\}_{i\geq 0}$ and $\{Z_i\}_{i\geq 0}$ as defined above would be standard branching processes with offspring distributions Geo($1-p$) and Geo($p$), respectively. When the cookie environments instead satisfy Assumption \ref{asmM} it can be shown that the forward branching processes can be described by a critical branching process with migration where the offspring distribution is Geo($1/2$) and the migration law is determined by the law $\P$ of the cookie environment (c.f.\ a similar representation of the backward branching process in \cite[Section 2]{bsCRWspeed}).

\subsection{The ``backward'' branching process}

The backward branching process can also be defined in terms of the auxilliary Bernoulli random variables $\{B_{x,j}\}$ that were used in the construction of the forward branching processes. 
In particular, for a fixed $v \in \Z_+$ let $V_0 = 0$ and define 
\begin{equation}\label{Vdef}
 V_{i+1} = \inf\left\{ m\geq 0: \, \sum_{j=1}^m B_{i,j} = V_i+1 \right\} - V_i-1, \qquad \forall i \geq 0. 
\end{equation}
As with the forward branching processes, Assumption \ref{asmiid} implies that the backward branching process $\{V_i\}_{i\geq 0}$ is a Markov chain. 
We will use $P_V^v$ to denote the marginal law of the backward branching process started with initial condition $V_0 = v$. In contrast to the forward branching processes, the backward branching process is not absorbing at $0$. In fact, as we will see below the initial condition $V_0 = 0$ will be very important and so we will use the notation $P_V$ to denote $P_V^0$.

While the forward branching processes are related to the excursions of the random walk from the origin, the backward branching process is related to the hitting times $T_n$ of the random walk. 
In particular, from the construction of the excited random walk in terms of the Bernoulli random variables $\{B_{x,j}\}$ we have 
that  $D_n^{T_n} = 0$ and
\begin{equation}\label{DTnni}
 D_{n-i}^{T_n} =
\begin{cases}
 \inf \left\{m \geq 0: \sum_{j=1}^m B_{n-i,j} = D_{n-i+1}^{T_n} + 1 \right\} - (D_{n-i+1}^{T_n}+1) & i=1,2,\ldots n \\
 \inf \left\{m \geq 0: \sum_{j=1}^m B_{n-i,j} = D_{n-i+1}^{T_n} \right\} - D_{n-i+1}^{T_n} & i > n. 
\end{cases}
\end{equation}

\begin{rem}
 In \eqref{DTnni} we are implicitly using that $T_n < \infty$. However, since the results of this paper are for $\d>1$ and the excited random walk is transient to the right when $\d>1$, we have that $P(T_n < \infty) = 1$. 
\end{rem}

The above representation for $D_{n-i}^{T_n}$ when $i\leq n$ is very similar to the construction of the backward branching process in \eqref{Vdef} with the only difference being the sequence of Bernoulli random variables that are used. However, Assumption \ref{asmiid} implies that the $\{0,1\}^{\N}$-valued random variables $\mathbf{B}_x = (B_{x,j})_{j\geq 1}$ are i.i.d., and so it follows that 
the process $(D_n^{T_n},D_{n-1}^{T_n}, \dotsc,D_0^{T_n})$ has the same distribution as the first $n$ generations of the backward branching process $(V_0,V_1,\ldots,V_n)$ started with $V_0 = 0$.

\begin{rem}\label{rem:immigrant}
In interpreting \eqref{DTnni} as a branching process, the ``children'' correspond to steps to the left of the random walk. Most of the steps the the left from a site $x$ can be thought of as ``descendents'' of a previous jump to the left from the site $x+1$. However, since the initial visit to a site $x\geq 0$ doesn't come from a previous jump to the left from $x+1$ there are some ``children'' in these generations that don't correspond to a ``parent'' from the previous generation. In branching process terminology, the first $n$ generations of the process have an extra immigrant before reproduction. 
\end{rem}

Comparing \eqref{Zdef} with \eqref{DTnni} when $i>n$, one obtains that $\{D_{n-i}^{T_n}\}_{i\geq n}$ has the same distribution as a left forward branching process $\{Z_i\}_{i\geq 0}$ with (random) initial condition $Z_0 = D_0^{T_n}$. 
For this reason it will be useful to expand the measure $P_V^v$ for the backward branching process in the following way: 
for any $n\geq 1$, $\{Z^{(n)}_i\}_{i\geq 0}$ will be a Markov chain with the following properties
\begin{enumerate}
 \item $P_V^v( Z_0^{(n)} = V_n ) = 1$. 
 \item $\{Z_i^{(n)}\}_{i\geq 0}$ is a left forward branching process. That is, 
\[
 P_V^v( \{Z_i^{(n)}\}_{i\geq 0} \in \cdot \, | \, V_n = z) = P_Z^z( \{Z_i\}_{i\geq 0} \in \cdot) \quad \text{for any } z \geq 0. 
\]
 \item Since the only change in the branching structure of $\{D_{n-i}^{T_n}\}_{i\geq 0}$ when $i>n$ is the lack of the extra immigrant before reproduction (see Remark \ref{rem:immigrant} above), we can couple the processes $\{Z_i^{(n)}\}_{i\geq 0}$ with the backward branching process $\{V_i\}_{i\geq 0}$ in such a way that 
\begin{equation}\label{VZcouple}
 P_V^v\left( Z_i^{(n)} \leq V_{n+i} \text{ for all } i\geq 0 \right) = 1. 
\end{equation}
\end{enumerate}
In summary, since $T_n = n + 2 \sum_{i\geq 0} D_{n-i}^{T_n}$ on the event $\{T_n < \infty\}$, we can conclude that 
\begin{equation}\label{TnVrel}
 P(T_n = k) = P_V\left( n + 2\sum_{i=0}^n V_i + 2\sum_{i\geq 1} Z_i^{(n)} = k\right), \quad \forall k < \infty. 
\end{equation}

\subsection{Diffusion process limits}
In this section we will review some of the limiting distributions for the forward and backward branching processes constructed above. 
We begin by introducing the diffusion processes that arise as the scaling limits. For any $\a \in \R$ and $y > 0$, let $Y_\a(t)$ be a solution to the stochastic differential equation 
\begin{equation}\label{BSQdef}
 dY_\a(t) = \a \, dt + \sqrt{2|Y_\a(t)| } \, dB(t), 
\quad Y_\a(0) = y, 
\end{equation}
where $\{B(t)\}_{t\geq 0}$ is a standard one-dimensional Brownian motion. 
We will use $P_{Y_\a}^y(\cdot)$ to denote the law of the diffusion process $\{Y_\a(t)\}_{t \leq \s_0^{Y_\a}}$ started at $Y_\a(0) = y$.
\begin{rem}
It should be noted that $2 Y_\a(t)$ is a squared Bessel process of dimension $2\a$; see \cite[Chapter XI]{ryCMBM} and \cite{gySGBP} for more information on squared Bessel processes. 
We will note here only a few basic properties. First of all, it is known that $Y_\a(t)$ has a unique strong solution for any $\a \in \R$ and $y>0$. 
Secondly, the hitting time $\s_0^{Y_\a}$ of 0 is almost surely finite if $\a < 1$ and infinite if $\a\geq 1$. 
Finally, if $\a \geq 0$ then the process $Y_\a(t) \geq 0$ for all $t$, while if $\a < 0$ then $Y_\a(t) $ is instead non-positive for $t \geq \s_0^{Y_\a}$. However, this will not be important in the current paper since we will only be interested in the diffusion $Y_\a(t)$ for $t \leq \s_0^{Y_\a}$.
\end{rem}

In \cite{kmLLCRW} it was observed that the backward branching process is well behaved when the population size is away from 0. In particular, 
for any $k\geq M$
\[
 E[V_{i+1}-V_i \, | \, V_i = k ] = 1-\d, \quad \text{and} \quad 
\Var(V_{i+1}-V_i \, | \, V_i=k ) = s + 2(V_i - M  + 1), 
\]
where $s>0$ is the variance of some specified non-degenerate random variable. 
These two facts suggest that when the population size is large that the (rescaled) backward branching process is similar to the diffusion $Y_{1-\d}(t)$. In fact, Kosygina and Mountford proved that this is true if the backward branching process is stopped sufficiently far away from 0. 
\begin{thm}[Lemma 3.1 in \cite{kmLLCRW}]\label{th:bbpdpl}
 Let Assumptions \ref{asmM}--\ref{asmnd} hold with $\d>0$, and let $\{\xi_n\}_{n\geq 1} = \{\xi_n(i)\}_{n\geq 1, \,i\geq 0}$ be a sequence of backward branching processes with initial conditions $\xi_n(0) = y_n$ such that $y_n/n \ra y > 0$ as $n\ra\infty$. 
Then, for any $\e>0$
\begin{equation}\label{eq:dpl}
 \left\{ \frac{\xi_n(\fl{nt} \wedge \s_{\e n}^{\xi_n} )}{n} \right\}_{t\geq 0 } 
\xRightarrow[n\ra\infty]{J_1}
\{Y_{1-\d}(t \wedge \s_\e^{Y_{1-\d}})\}_{t \geq 0}, \quad \text{with } Y_{1-\d}(0) = y,
\end{equation}
where $\overset{J_1}{\Longrightarrow}$ denotes convergence in distribution on the space of cadlag processes with the Skorohod-$J_1$ topology. 
\end{thm}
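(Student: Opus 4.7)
The plan is to apply the standard generator-convergence framework for Markov chain diffusion approximations. Write $\bar\xi_n(t) := \xi_n(\fl{nt})/n$ and $\tau_n := \s_{\e n}^{\xi_n}/n$. For $n$ sufficiently large we have $\e n > M$, so the moment identities quoted just before the statement apply on all of $[0,\tau_n]$.

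First, I would compute the discrete generator acting on $f \in C_c^2((0,\infty))$,
\[
A_n f(x) := n\, \E\bigl[f(\xi_n(i+1)/n) - f(\xi_n(i)/n) \,\big|\, \xi_n(i) = nx\bigr],
\]
and show, via a second-order Taylor expansion together with $\E[V_{i+1}-V_i \mid V_i=k]=1-\d$ and $\Var(V_{i+1}-V_i \mid V_i=k) = s + 2(k-M+1)$, that
\[
A_n f(x) \longrightarrow (1-\d) f'(x) + x f''(x) =: Lf(x),
\]
uniformly on any compact $[\e, K] \subset (0,\infty)$. The operator $L$ is precisely the infinitesimal generator of the SDE \eqref{BSQdef} with $\a = 1-\d$. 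Control of the Taylor remainder reduces to a uniform third-moment bound on the centered step $V_{i+1}-V_i-(1-\d)$; under Assumption \ref{asmM} the migration part of each step has bounded support and the reproductive part is a sum of geometric variables, so such a bound follows routinely.

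Next I would establish tightness of $\{\bar\xi_n(\cdot \wedge \tau_n)\}_n$ in the Skorohod $J_1$ topology. Using the Doob decomposition $V_i = V_0 + (1-\d)i + N_i$ with a martingale $N$ whose bracket satisfies $\langle N\rangle_i \leq C \sum_{j<i}(1+V_j)$, Doob's maximal inequality and a Gronwall-type iteration give the a priori bound $\E[\sup_{t \leq T}\bar\xi_n(t \wedge \tau_n)^2] \leq C(y,T)$. Plugging this into Aldous' criterion, together with the variance bound $\Var(V_{i+1}-V_i\mid V_i) \leq C(1+V_i)$, yields tightness. For any subsequential weak limit $Y^*$ (taken jointly with $\bar\xi_n$), the martingale
\[
M_n^f(t) := f\bigl(\bar\xi_n(t \wedge \tau_n)\bigr) - \int_0^{t\wedge\tau_n} A_n f(\bar\xi_n(s))\, ds
\]
converges to $f(Y^*(t\wedge \s_\e^{Y^*})) - \int_0^{t\wedge\s_\e^{Y^*}} Lf(Y^*(s))\,ds$, which is then itself a martingale. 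Since \eqref{BSQdef} has a unique strong solution, the stopped martingale problem for $L$ on $[\e,\infty)$ is well-posed, identifying $Y^*$ with $Y_{1-\d}(\cdot \wedge \s_\e^{Y_{1-\d}})$ in distribution and completing the convergence.

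The main technical obstacle is the continuity of the hitting functional $\xi \mapsto \s_\e^\xi$: one must rule out a subsequential limit that touches but does not cross $\e$, so that $\tau_n$ converges in probability to $\s_\e^{Y_{1-\d}}$ and the stopping operation passes to the limit. I would handle this by exploiting the non-degenerate volatility $\sqrt{2\e}>0$ of $Y_{1-\d}$ at level $\e$, which forces the limiting diffusion to oscillate strictly across $\e$ at its hitting time; combined with the fact that $Y_{1-\d}$ spends no positive amount of time at any fixed level, this gives the required continuity $\P$-a.s.\ on the support of the limit. The integer-truncation discrepancy from $\fl{nt}$ introduces only an $O(1/n)$ correction that is absorbed without difficulty.
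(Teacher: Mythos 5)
This statement is not proved in the paper at all: it is imported verbatim as Lemma 3.1 of \cite{kmLLCRW}, so there is no internal proof to compare your argument against. Your proposal is essentially the standard route taken in that source (and sharpened in \cite{kzEERW}): convergence of the discrete generator on compacts away from the boundary using the quoted conditional mean and variance (valid above level $\e n > M$, which is all that is needed for the stopped process), tightness via the martingale part of the Doob decomposition and Aldous' criterion, identification through the well-posed stopped martingale problem for the squared-Bessel-type generator, and a.s.\ continuity of the passage below level $\e$ coming from the non-degenerate diffusion coefficient $\sqrt{2\e}$ at that level; this outline is sound and I see no gap in it.
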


The techniques of Mountford and Kosygina were recently improved and applied to the forward branching processes in \cite{kzEERW}. For the left and right forward branching processes, it is known that 
\[
 E[Z_{i+1}-Z_i \, | \, Z_i = k ] = -\d, \quad \text{and} \quad E[W_{i+1}-W_i \, | \, W_i = k ] = \d, 
\]
for all $k>M$ (see \cite[Lemma 10]{kzPNERW}), while the variance of the increments are similar to those of the backward branching process. Thus, the corresponding diffusion process limits for the left and right forward branching processes are $Y_{\text{-}\d}(t)$ and $Y_\d(t)$, respectively. 
In \cite{kzEERW}, Kosygina and Zerner also studied the scaling limits for the forward branching process conditioned to die out. 
When $\d>1$, both the forward branching process $W_i$ and the diffusion limit $Y_\d(t)$ are transient (since excited random walk is transient to the right, excursions to the right of the origin can potentially never return to the origin). It is easy to see that for any $\e>0$ the process $Y_\d(\cdot \wedge \tau_\e^{Y_\d} )$  conditioned on the event $\{\tau_\e^{Y_\d} < \infty\}$ has the same distribution as the process $Y_{2-\d}(\cdot \wedge \tau_\e^{Y_{2-\d}})$. In \cite{kzEERW} it was shown that the forward branching process conditioned on the event $\{\s_0^W < \infty\}$ converges in distribution (after rescaling) to the diffusion process $Y_{2-\d}(\cdot \wedge \s_0^{Y_{2-\d}} )$. 
The diffusion limits for the forward branching processes proved in \cite{kzEERW} are stronger than the diffusion limit for the backward branching process in Theorem \ref{th:bbpdpl}. In particular, the diffusion limits in \cite{kzEERW} hold all the way down to $\s_0^{Y_\a}$ and limiting distribution are obtained for the lifetime and the total progeny of the forward branching processes.

\begin{thm}[Theorem 14 in \cite{kzEERW}]\label{th:fbpdpl}
 Let Assumptions \ref{asmM}--\ref{asmnd} hold with $\d>1$. If $\{\xi_n\}_{n\geq 1} = \{\xi_n(i)\}_{n\geq 1, \,i\geq 0}$ is a sequence of either 
\begin{inparaenum}[1)]
 \item left forward branching processes, or
 \item right forward branching processes conditioned to die out,
\end{inparaenum}
and the initial conditions $\xi_n(0) = y_n$ are such that $y_n/n \ra y>0$ as $n\ra\infty$, then 
\[
 \left\{ \frac{\xi_n(\fl{nt})}{n} \right\}_{t \geq 0} 
\xRightarrow[n\ra\infty]{J_1}
\{Y_{\a}(t \wedge \s_0^{Y_\a} )\}_{t\geq 0},  \quad \text{with }Y_{\a}(0) = y,
\]
where the value of $\a$ is 
\begin{inparaenum}[1)]
 \item $\a=-\d$, or
 \item $\a=2-\d$
\end{inparaenum}
based on which type of branching processes the $\xi_n$ are (left forward, or conditioned right forward, respectively). 
Moreover, the following following limiting distributions hold for the lifetime and total progeny of the branching processes. 
\[
 \frac{\s_0^{\xi_n}}{n} \xRightarrow[n\ra\infty]{} \s_0^{Y_\a} \quad \text{and} \quad 
\frac{1}{n^2} \sum_{i\geq 0} \xi_n(i)  \xRightarrow[n\ra\infty]{} \int_0^{\s_0^{Y_\a}} Y_\a(t) \, dt, 
\]
where $\Rightarrow$ signifies convergence in distribution and either \begin{inparaenum}[1)]
 \item $\a=-\d$, or
 \item $\a=2-\d$,
\end{inparaenum}
depending on the type of branching process that $\xi_n$ is. 
\end{thm}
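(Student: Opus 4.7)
\medskip

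\noindent\textbf{Proof proposal.} The plan is to follow the same scheme as in Theorem \ref{th:bbpdpl} (Kosygina--Mountford) for the diffusion convergence away from $0$, then patch up what happens near $0$, and finally handle the conditioned case via a Doob $h$-transform. Recall that the one-step drift of $Z$ is $-\d$ and of $W$ is $\d$ once the population exceeds $M$, while the conditional variance of the increments is asymptotically $2\xi_n(i)$; this is exactly the infinitesimal data of the SDE \eqref{BSQdef} with $\a=-\d$ or $\a=\d$, respectively. So the natural candidate scaling limits are $Y_{-\d}$ for the left forward process and $Y_\d$ for the (unconditioned) right forward process. The twist in case (2) is that we condition on extinction, which by a standard $h$-transform argument should turn $Y_\d$ into $Y_{2-\d}$.

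First, I would establish the $J_1$-convergence up to first entrance into a macroscopic neighborhood of $0$. Precisely, fix $\e>0$ and show
\[
\left\{\frac{\xi_n(\fl{nt}\wedge \s_{\e n}^{\xi_n})}{n}\right\}_{t\geq 0} \xRightarrow[n\ra\infty]{J_1} \{Y_\a(t\wedge \s_\e^{Y_\a})\}_{t\geq 0},\qquad Y_\a(0)=y,
\]
with $\a=-\d$ for the left forward process. This is a direct adaptation of Theorem \ref{th:bbpdpl}: one writes $\xi_n(i+1)-\xi_n(i)$ as a martingale increment plus its conditional mean, invokes Assumptions \ref{asmM}--\ref{asmnd} to control uniform integrability of the increments on the scale $\xi_n(i)\asymp n$, and applies a standard martingale-problem/generator-convergence argument (e.g.\ Ethier--Kurtz) to the rescaled generator, which matches that of \eqref{BSQdef}.

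The main obstacle, and where this theorem really goes beyond Theorem \ref{th:bbpdpl}, is extending the convergence all the way down to $\s_0^{\xi_n}$. The point is that once $\xi_n$ drops to order $\e n$ it could in principle linger near $0$ for an amount of time of order $n$ before being absorbed, which would destroy convergence of lifetimes. The key estimate to prove is: for every $\eta>0$ there exists $\e>0$ such that
\[
\limsup_{n\ra\infty} P\!\left(\s_0^{\xi_n}-\s_{\e n}^{\xi_n}>\eta n\;,\;\s_{\e n}^{\xi_n}<\infty\right)<\eta.
\]
Near $0$ the process has negative mean drift ($-\d<0$ in case (1) and the $h$-transformed drift in case (2)) plus bounded-range migration, so a Foster--Lyapunov / exponential martingale argument on the un-rescaled chain shows that starting from $\e n$ it is absorbed within time at most $C\e n$ with overwhelming probability. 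Combined with the corresponding fact for $Y_\a$ that $\s_0^{Y_\a}-\s_\e^{Y_\a}\ra 0$ in probability as $\e\downarrow 0$, this lets us pass $\e\to 0$ in the previous step and yields the full $J_1$-convergence on $[0,\infty)$ of $\xi_n(\fl{nt})/n$ to $Y_\a(t\wedge \s_0^{Y_\a})$.

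For case (2), I would first verify at the level of diffusions that $Y_\d$ conditioned on $\{\tau_\e^{Y_\d}<\infty\}$ (equivalently on $\{\s_0^{Y_\d}<\infty\}$ when starting from $y>\e$) equals $Y_{2-\d}$; this follows from the explicit scale function of the squared-Bessel SDE and is already noted in the text preceding the theorem. The discrete counterpart is that the right forward branching process conditioned on $\{\s_0^W<\infty\}$ is again a Markov chain, whose transition kernel is the $P_W^w$-kernel $h$-transformed by $w\mapsto P_W^w(\s_0^W<\infty)$. Using a discrete scale-function estimate to show $P_W^{\fl{ny_n}}(\s_0^W<\infty)$ converges uniformly on compacts to the corresponding diffusion quantity, one obtains that the conditioned chain has the same generator convergence as in step 1 but with drift $-\d$ replaced by $2-\d$; the rest of the argument then runs verbatim. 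Finally, the lifetime and total-progeny statements follow from the continuous mapping theorem: the maps $\xi\mapsto \s_0^\xi$ and $\xi\mapsto \int_0^{\s_0^\xi}\xi(t)\,dt$ are continuous at paths that hit $0$ and then stay there, which is a.s.\ the case for $Y_\a$ with $\a<1$, and both discrete functionals are precisely the natural rescalings.
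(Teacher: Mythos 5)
You should first be aware that the paper does not prove this statement at all: it is quoted, with attribution, as Theorem 14 of \cite{kzEERW}, so there is no internal argument to compare yours against. The meaningful comparison is with Kosygina and Zerner's proof in \cite{kzEERW}, and in outline your sketch does follow their strategy: refine the Kosygina--Mountford diffusion approximation (the paper's Theorem \ref{th:bbpdpl}) so that it holds all the way down to absorption, and treat the conditioned right forward process through the harmonic function $h(w)=P_W^w(\sigma_0^W<\infty)$, whose polynomial asymptotics (Proposition 16 of \cite{kzEERW}) are what make the shift from drift $\delta$ to $2-\delta$ quantitative.

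As a proof, however, the sketch has genuine gaps. First, the last step is not a continuous-mapping argument: $\xi\mapsto\sigma_0^\xi$ and $\xi\mapsto\sum_i\xi(i)$ (equivalently $\int_0^{\sigma_0}\xi$) are not $J_1$-continuous at paths absorbed at $0$, precisely because prelimit paths can hover at positive heights $o(n)$ for times of order $n$ or climb back up after first reaching level $\varepsilon n$; the convergence of the lifetime must be extracted directly from your near-zero estimate, and for the total progeny you additionally need that the mass accumulated below level $\varepsilon n$ is $o(n^2)$ with high probability (e.g.\ via hitting estimates as in Lemma \ref{fbphitprob} to exclude a return to scale $n$), a point the sketch does not address. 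Relatedly, ``absorbed within time $C\varepsilon n$ with overwhelming probability'' is too strong: from level $\varepsilon n$ the process returns to scale $n$ with probability of order $\varepsilon^{\delta+1}$, which is small but only polynomially so; what you can and do need to prove is the weaker non-lingering bound you state. Second, in the conditioned case your route through ``generator convergence of the $h$-transformed chain with drift $2-\delta$'' requires more than locally uniform convergence of the rescaled $h$: identifying the transformed one-step drift forces you to control ratios $h(w+k)/h(w)$ for increments $k$ of order $\sqrt{w}$, i.e.\ second-order regularity of $h$, which the bare asymptotics $h(w)\sim C w^{1-\delta}$ do not give. A cleaner way to close this (closer to what is actually done) is to transfer the unconditioned limit via the Radon--Nikodym factor $h(\xi_n(\cdot\wedge\sigma_{\varepsilon n}))/h(\xi_n(0))$ on stopped $\sigma$-fields, which needs only the first-order asymptotics of $h$ and the already-established convergence to $Y_\delta$, and then handle the regime below $\varepsilon n$ separately for the conditioned chain.
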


\begin{rem}
 It is suspected that the stronger conclusions in Theorem \ref{th:fbpdpl} also hold for the backward branching process, although Theorem \ref{th:bbpdpl} is sufficient for our purposes in the current paper. In fact the analysis of the forward and backward branching processes is very similar and many of the proofs carry over almost word for word (see \cite[Remark 9]{kzEERW}). We also note that the results in \cite{kzEERW} cover a wider range of $\d$ than as stated in Theorem \ref{th:fbpdpl} above. For simplicity we only consider the case $\d>1$ since this is what is needed for the current paper. 
\end{rem}

\subsection{Additional branching process results}

Before proceeding to the proof of the main results of the current paper, we will recall some previous results for the forward and backward branching processes that will be useful in the remainder of the paper. 
We begin with a simple monotonicity property of the branching processes. 
\begin{lem}\label{bpmono}
 For any fixed integers $1\leq z\leq z'$, one can couple two left forward branching processes  $\{Z_i\}_{i\geq 0}$ and $\{ Z_i' \}_{i\geq 0}$ so that $Z_0 = z$, $Z_0'=z'$ and $Z_i \leq Z_i'$ for all $i\geq 0$. 
A similar statement is true for the right forward branching process and the backward branching process. 
\end{lem}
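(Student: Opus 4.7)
The natural strategy is a direct pathwise coupling: use one common family of Bernoulli variables $\{B_{x,j}\}$ to drive both processes simultaneously via the defining recursions \eqref{Wdef}, \eqref{Zdef}, \eqref{Vdef}, and then prove $Z_i\le Z_i'$ (resp.\ for $W$ and $V$) by induction on the generation $i$. The key observation is that each recursion reads off an initial segment of a Bernoulli sequence whose length is determined by hitting a threshold that depends monotonically on the previous generation; if the threshold is larger, the segment is longer, and one can quantify exactly by how much.

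\textbf{Execution for the left forward process.} Take $Z_0=z$, $Z_0'=z'$ and build both chains from the \emph{same} Bernoullis $\{B_{-i,j}\}_{i,j\ge 1}$ via \eqref{Zdef}. For the induction step, assume $Z_{i-1}\le Z_{i-1}'$. Define the stopping indices
\[
m=\inf\Bigl\{m\ge 0:\sum_{j=1}^m B_{-i,j}=Z_{i-1}\Bigr\},\qquad m'=\inf\Bigl\{m\ge 0:\sum_{j=1}^m B_{-i,j}=Z_{i-1}'\Bigr\}.
\]
Since $k\mapsto\sum_{j=1}^k B_{-i,j}$ is nondecreasing and $Z_{i-1}\le Z_{i-1}'$, one has $m\le m'$. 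Moreover, between indices $m+1$ and $m'$ the partial sum increases from $Z_{i-1}$ to $Z_{i-1}'$, so exactly $Z_{i-1}'-Z_{i-1}$ of the $m'-m$ Bernoullis in that window are $1$'s; the remaining $m'-m-(Z_{i-1}'-Z_{i-1})\ge 0$ are $0$'s. Consequently,
\[
Z_i'-Z_i=(m'-Z_{i-1}')-(m-Z_{i-1})=(m'-m)-(Z_{i-1}'-Z_{i-1})\ge 0,
\]
which closes the induction. The degenerate case $Z_{i-1}=0$ is harmless: then $m=0$, $Z_i=0$, and the inequality is automatic.

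\textbf{The other two cases.} For the right forward process $\{W_i\}$, the identical argument applies with $B_{-i,j}$ replaced by $1-B_{i,j}$ in \eqref{Wdef}; the partial sums $\sum_{j=1}^m(1-B_{i,j})$ are again nondecreasing with unit jumps, which is all that was used. For the backward process $\{V_i\}$, the same reasoning applies to \eqref{Vdef} with thresholds $V_i+1$ and $V_i'+1$: if $V_i\le V_i'$ the analogous stopping times $m\le m'$ satisfy $m'-m\ge(V_i'+1)-(V_i+1)=V_i'-V_i$, whence $V_{i+1}'-V_{i+1}=(m'-m)-(V_i'-V_i)\ge 0$.

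\textbf{Main obstacle.} There is no substantive obstruction — the lemma is purely structural, and the proof is really just choosing the ``right'' coupling. The one point worth noting is that the argument uses the \emph{same} sequence $B_{-i,\cdot}$ to generate both $Z_i$ and $Z_i'$, which requires the processes to be built on a common probability space; this is consistent with the construction in Section~\ref{sec:BP}, where all three branching processes are defined directly in terms of the auxiliary Bernoulli family $\{B_{x,j}\}$.
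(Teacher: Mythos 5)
Your proposal is correct and follows essentially the same route as the paper: both couple the two processes by driving them with the same Bernoulli family $\{B_{x,j}\}$ and then note that the count of failures before the $Z_{i-1}$-th success is monotone in $Z_{i-1}$. You merely spell out the inductive step (the identity $Z_i'-Z_i=(m'-m)-(Z_{i-1}'-Z_{i-1})\ge 0$) that the paper leaves as "follows immediately."
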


\begin{proof}
 The monotonicity follows easily from the construction of the branching processes. In particular, recalling the family of Bernoulli random variables $\{B_{x,j}\}_{x\in\Z, j\geq 1}$, if we regard $B_{x,j} = 1$ as a ``success'' and $B_{x,j} = 0$ as a ``failure'' then $Z_i$ is the number of failures in the sequence $\{B_{-i,j}\}_{j\geq 1}$ before the $Z_{i-1}$-th success. If we construct the branching processes $Z_i$ and $Z_i'$ using the same family of Bernoulli random variables, then the desired coupling follows immediately. 
The monotonicity for the other branching processes is similar. 
\end{proof}

Next, we will give some asymptotics of hitting probabilities for the left forward branching process.
The hitting probabilities for the limiting diffusion are easily obtained by noting that $(Y_{\text{-}\d}(t))^{1+\d}$ is a martingale. 
While this fact is not used in the proofs of the following two Lemmas, it does give insight as to how they are proved. 

\begin{lem}\label{fbphitprob}
 There exists $\ell_0<\Z_+$ and constants $0<c<C$ such that 
\[
 c 2^{-(u-m)(\d+1)} \leq P_Z^{2^m}( \s_{2^\ell}^Z > \tau_{2^u}^Z ) \leq C 2^{-(u-m)(\d+1)}, 
\quad \text{for integers } \ell_0\leq \ell < m < u. 
\]
\end{lem}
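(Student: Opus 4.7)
The plan is to combine the local martingale $(Y_{-\d}(t))^{1+\d}$ highlighted in the preceding remark with its discrete analogue $Z_i^{1+\d}$. For the limit diffusion, It\^{o} on $f(y)=y^{1+\d}$ applied to $dY_{-\d}=-\d\,dt+\sqrt{2Y_{-\d}}\,dB$ gives $df(Y_{-\d})=\sqrt 2(1+\d)Y_{-\d}^{\d+1/2}\,dB$. Since $\d>1$ implies $\s_a^{Y_{-\d}}<\infty$ a.s.\ and $f(Y_{-\d})$ is bounded on $[0,\s_a^{Y_{-\d}}\wedge\tau_b^{Y_{-\d}}]$, optional stopping yields
\[
P_{Y_{-\d}}^{y}\bigl(\tau_b^{Y_{-\d}}<\s_a^{Y_{-\d}}\bigr)=\frac{y^{1+\d}-a^{1+\d}}{b^{1+\d}-a^{1+\d}},
\]
which for $a=2^\ell$, $y=2^m$, $b=2^u$ with $\ell<m<u$ is $\asymp 2^{-(u-m)(1+\d)}$, uniformly in $\ell,m,u$.

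To transfer this to $\{Z_i\}$ I would use the same function $f(z)=z^{1+\d}$. From the moment identities $E[Z_{i+1}-Z_i\mid Z_i=k]=-\d$ (valid for $k>M$ by \cite[Lemma 10]{kzPNERW}) and $\Var(Z_{i+1}-Z_i\mid Z_i=k)=2k+O(1)$, together with a Taylor expansion whose remainder is controlled by the tails of $Z_{i+1}$ (a sum of bounded migration and geometric$(1/2)$ terms in \eqref{Zdef}), one obtains $|E[f(Z_{i+1})-f(Z_i)\mid Z_i=k]|\leq C k^{\d-1}$ for $k$ large. Apply optional stopping at $\tau:=\s_{2^\ell}^Z\wedge\tau_{2^u}^Z$:
\[
E_Z^{2^m}[f(Z_\tau)]=f(2^m)+E_Z^{2^m}\!\left[\sum_{i<\tau}E[f(Z_{i+1})-f(Z_i)\mid\mathcal{F}_i]\right].
\]
Writing $P:=P_Z^{2^m}(\tau_{2^u}^Z<\s_{2^\ell}^Z)$, the left side equals $2^{u(1+\d)}(1+o(1))P+2^{\ell(1+\d)}(1+o(1))(1-P)$, since a single branching step from height $k$ has size $O(\sqrt k)$ with high probability, making the overshoot at $2^u$ and the undershoot at $2^\ell$ negligible relative to $2^u$ and $2^\ell$. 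The right-hand-side error is bounded by $C\,E_Z^{2^m}[\sum_{i<\tau}Z_i^{\d-1}]$; using an auxiliary almost-martingale $h(z)=z^\d/\d$ (satisfying $E[h(Z_{i+1})-h(Z_i)\mid Z_i=k]=-k^{\d-1}+O(k^{\d-2})$) this can be sharpened to $C\,2^{m\d}$. Since $2^{m\d}/2^{m(1+\d)}=2^{-m}\to 0$ as $m\to\infty$, solving for $P$ gives the claimed $P\asymp 2^{-(u-m)(1+\d)}$ uniformly in $u>m\geq\ell+1\geq\ell_0+1$.

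The main technical obstacle is the sharp control of the cumulative error $E[\sum_{i<\tau}Z_i^{\d-1}]$: a naive bound via $\tau=O(2^u)$ produces $(2^u)^{\d-1}\cdot 2^u=2^{u\d}$, which can exceed the leading term $2^{m(1+\d)}$ when $u\gg m$, so one must instead apply the discrete Dynkin formula for $h(z)=z^\d/\d$ to recover the sharper $2^{m\d}$ bound (mirroring the Green's-function computation $E^{2^m}_{Y_{-\d}}[\int_0^{\s_a\wedge\tau_b} Y^{\d-1}\,dt]\asymp 2^{m\d}$ for the diffusion). In turn, the Taylor remainders in the approximate martingale equations for $f$ and $h$ need third-moment estimates on the branching increments, which follow from Assumption \ref{asmM} and the geometric-tail structure of the increments in \eqref{Zdef}; choosing $\ell_0$ sufficiently large then forces all the remainders to be uniformly small over $\ell,m,u$ with $\ell\geq\ell_0$.
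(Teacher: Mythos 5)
Your proposal is correct in outline but takes a genuinely different route from the paper. The paper handles this lemma essentially by citation: the analogous two-sided hitting estimate for the backward branching process is \cite[Lemma 5.3]{kmLLCRW}, and the paper observes that its proof carries over word for word to the left forward process with $\delta$ replaced by $\delta+1$, giving explicit bounds $\frac{h_\ell^-(m)-1}{h_\ell^-(u)-1}\leq P_Z^{2^m}(\sigma_{2^\ell}^Z>\tau_{2^u}^Z)\leq\frac{h_\ell^+(m)-1}{h_\ell^+(u)-1}$ with $h_\ell^{\pm}(k)=\prod_{i=\ell+1}^k(2^{\delta+1}\mp2^{-\lambda i})$, which are then squeezed between powers $2^{(\delta+1)(k-\ell\pm1)}$. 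You instead give a self-contained Lyapunov-function argument: the approximate scale function $z^{1+\delta}$ (exact for the limit diffusion), Dynkin's formula at $\tau=\sigma_{2^\ell}^Z\wedge\tau_{2^u}^Z$, and crucially the Green's-function bound $E_Z^{2^m}[\sum_{i<\tau}Z_i^{\delta-1}]\leq C2^{m\delta}$ via the auxiliary function $z^\delta/\delta$ --- you correctly identified that the naive bound on the cumulative error destroys the exponent and that the Poisson-equation trick repairs it; your one-step drift computations are consistent with $E[Z_{i+1}-Z_i\mid Z_i=k]=-\delta$ and $\mathrm{Var}(Z_{i+1}-Z_i\mid Z_i=k)=2k+O(1)$. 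Three refinements are needed to make it airtight, none fatal: (i) the Taylor remainders require fourth-moment/truncation control, not just third moments, since $E[|Z_{i+1}-Z_i|^3\mid Z_i=k]\asymp k^{3/2}$; it is the signed third moment, of order $O(k)$, that enters the cancellation; (ii) your ``$(1+o(1))$'' treatment of the overshoot at $2^u$ is stronger than you can easily justify --- conditioned on a (possibly large-deviation) crossing step from height $k$, the expected overshoot can be of order $k$, hence comparable to $2^u$ --- but since the lemma only claims $\asymp$, the crude bound $E_Z^{2^m}[f(Z_\tau);\,\tau_{2^u}^Z<\sigma_{2^\ell}^Z]\leq C2^{u(1+\delta)}P$ (dominating the overshoot by a negative binomial with at most $2^u$ successes) suffices, and for the down-crossing one only needs $0\leq E_Z^{2^m}[f(Z_\tau);\,\sigma_{2^\ell}^Z<\tau_{2^u}^Z]\leq2^{\ell(1+\delta)}\leq2^{-(1+\delta)}2^{m(1+\delta)}$; (iii) the optional-stopping identities should be applied at $\tau\wedge N$ and passed to the limit, which is routine given the integrability you establish. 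With those adjustments your argument closes, and what it buys over the paper's proof is self-containedness --- no appeal to the Kosygina--Mountford machinery --- at the cost of redoing the drift and moment estimates that the cited proof packages into the explicit functions $h_\ell^{\pm}$.
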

\begin{proof}
Upper and lower bounds for the corresponding hitting probabilities of the backward branching process were given in \cite[Lemma 5.3]{kmLLCRW}. 
However, the same proof follows through essentially word for word for the left forward branching process $Z_i$ by replacing $\d$ with $\d+1$ everywhere. In particular, there exists a $\l>0$ and an $\ell_*>0$ such that
\[
 \frac{h_\ell^-(m) - 1}{h_\ell^-(u) - 1} \leq P_Z^{2^m}( \s_{2^\ell}^Z > \tau_{2^u}^Z) ) \leq  \frac{h_\ell^+(m) - 1}{h_\ell^+(u) - 1}, 
\quad \text{for all } \ell_*\leq \ell < m < u,
\]
where the functions $h_\ell^{\pm}$ are given by $h_\ell^{\pm}(k) = \prod_{i=\ell+1}^k (2^{\d+1}\mp 2^{-\l i})$ for $k > \ell$. 
It is easy to see that if $\ell$ is sufficiently large then 
\[
 2^{(\d+1)(k-\ell-1)} \leq h_\ell^+(k) \leq 2^{(\d+1)(k-\ell)} \leq h_\ell^-(k) \leq 2^{(\d+1)(k-\ell+1)}, \quad \forall k > \ell.
\]
From this the conclusion of the lemma follows easily. 
\end{proof}

\begin{lem}\label{fbphpasymp}
For any fixed $z \geq 1$, there exists a constant $C>0$ (depending on $z$) such that 
\[
 P_Z^z( \tau_n^Z < \s_0^Z) \leq C n^{-(\d+1)}, \quad \forall n \geq 1. 
\]
\end{lem}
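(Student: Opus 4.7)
The plan is to combine Lemma \ref{fbphitprob} with an excursion-based renewal argument. First I would reduce to the dyadic scale: taking $u = \lfloor \log_2 n \rfloor$, one has $2^u \leq n$ and $n^{-(\d+1)} \asymp 2^{-u(\d+1)}$. Since $\tau_n^Z \geq \tau_{2^u}^Z$ yields $\{\tau_n^Z < \sigma_0^Z\} \subseteq \{\tau_{2^u}^Z < \sigma_0^Z\}$, it suffices to show
\[
 P_Z^z(\tau_{2^u}^Z < \sigma_0^Z) \leq C \, 2^{-u(\d+1)}
\]
for all sufficiently large $u$.

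For this I would decompose the trajectory of $Z$ into excursions between the ``low'' region $[0, 2^{\ell_0}]$ and the ``high'' region $[2^{\ell_0+1}, \infty)$, where $\ell_0$ is the constant from Lemma \ref{fbphitprob}. Reaching $2^u$ without first hitting $0$ requires some upcrossing to the high region to be followed by hitting $2^u$ before returning to the low region. For a single upcrossing, let $Y$ denote the overshoot level at which $Z$ first enters the high region. By monotonicity (Lemma \ref{bpmono}) and Lemma \ref{fbphitprob}, with $m = \lceil \log_2 Y \rceil$,
\[
 P_Z^Y(\tau_{2^u}^Z < \sigma_{2^{\ell_0}}^Z) \leq P_Z^{2^m}(\tau_{2^u}^Z < \sigma_{2^{\ell_0}}^Z) \leq C\, 2^{-(u-m)(\d+1)} \quad \text{for } \ell_0 < m < u,
\]
with the trivial bound of $1$ for $m \geq u$. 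Assumption \ref{asmM} ensures the one-step jump distribution of $Z$ from any state $k \leq 2^{\ell_0}$ has exponentially decaying tails (after $M$ biased Bernoullis the remaining ones are fair), so $P(Y \geq 2^m) \leq C e^{-c 2^m}$ uniformly in the pre-jump state. Summing over possible overshoots gives
\[
 P(\text{single upcrossing reaches } 2^u) \leq \sum_{m \geq \ell_0+1} C e^{-c 2^m} \cdot 2^{-(u-m)(\d+1)} \leq C'\, 2^{-u(\d+1)},
\]
dominated by its $m = \ell_0+1$ term.

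To handle multiple excursions, Assumption \ref{asmnd} provides a lower bound $\e > 0$ on $P_Z^y(Z_1 = 0)$ uniformly for $y \in [1, 2^{\ell_0}]$, since this probability equals an expected product of finitely many cookie probabilities. Thus each visit to the low region has probability at least $\e$ of ending in immediate absorption rather than another upcrossing, so the number of upcrossings is stochastically dominated by a geometric random variable with parameter $\e$. Summing geometrically over excursions then yields $P_Z^z(\tau_{2^u}^Z < \sigma_0^Z) \leq (C'/\e)\, 2^{-u(\d+1)}$, completing the argument.

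The main obstacle is controlling the overshoot $Y$: a priori the process could jump from the low region to a level comparable to $2^u$, in which case Lemma \ref{fbphitprob} yields only a vacuous bound and the upcrossing would succeed with probability close to one. The exponentially-decaying single-step jump distribution, a consequence of the bounded-cookie assumption, ensures that such large overshoots occur with negligible probability, so their contribution does not spoil the $2^{-u(\d+1)}$ rate.
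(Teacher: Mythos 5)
Your proof is correct, but it follows a genuinely different route from the paper's: the paper disposes of this lemma by citation, noting that the corresponding statement for the backward branching process is \cite[Corollary 5.5]{kmLLCRW} and that the proof there goes through essentially word for word for the left forward process (with $\d$ replaced by $\d+1$). You instead give a self-contained derivation from Lemma \ref{fbphitprob}: a dyadic reduction, an excursion decomposition between the low region $[0,2^{\ell_0}]$ and its complement, exponential control of the one-step overshoot into the high region (valid because, by Assumption \ref{asmM}, a step from a state $k\le 2^{\ell_0}$ is stochastically dominated by $M$ plus a negative binomial built from fair coins), and geometric control of the number of excursions via the uniform one-step absorption bound $P_Z^y(Z_1=0)=\E[\prod_{j\le y}\w(0,j)]\ge\e$ coming from Assumption \ref{asmnd}. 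The steps check out: the event $\{\tau_{2^u}^Z<\s_{2^{\ell_0}}^Z\}$ is monotone under the coupling of Lemma \ref{bpmono}, the overshoot series is summable and dominated by its lowest dyadic block, and the strong Markov property justifies both the per-excursion bound and the geometric summation over excursions. The remaining loose ends are cosmetic: the high region should be $[2^{\ell_0}+1,\infty)$ rather than $[2^{\ell_0+1},\infty)$ (either convention works); the first excursion from the fixed initial state $z$, which may itself exceed $2^{\ell_0}$, should be bounded directly by Lemma \ref{fbphitprob} with $m=\cl{\log_2 z}$ — this is exactly where the dependence of $C$ on $z$ enters — and bounded $n$ are absorbed into the constant. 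What your route buys is independence from the external reference: everything is deduced from lemmas and assumptions already stated in this paper, at the cost of a page of excursion bookkeeping that the citation avoids; in spirit it reconstructs the argument behind the cited corollary of Kosygina and Mountford.
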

\begin{proof}
The corresponding statement for the backward branching process was proved in \cite[Corollary 5.5]{kmLLCRW}, however, the same proof works for the left forward branching process when $\d>-1$. 
\end{proof}

The final results for the branching processes that we will recall are specific to the backward branching process. 
Unlike the forward branching processes, the backward branching process $V_i$ is an irreducible Markov chain since it is not absorbing at 0 (this is due to the extra ``immigrant'' before reproduction in every generation). Therefore, we can introduce a regeneration structure for the backward branching process. Let $r_0^V = \s_0^V$ be the first time the process $V_i$ reaches $0$ (if $V_0 = 0$ then $r_0^V = 0$). Successive returns to 0 are then given by 
\[
 r_k^V = \inf\{ i > r_{k-1}^V : \, V_i = 0 \}, \quad \forall k\geq 1. 
\]
A priori, it could be that $r_k^V = \infty$ for some $k$. However, it is known that when $\d>1$ then the backward branching process is recurrent and so all regeneration times $r_k^V$ are finite with probability 1. 
Another important random variable associated with the backward branching process is the total progeny in the branching process between regeneration times. That is, 
\[
 S_k^V = \sum_{i=r_{k-1}^V}^{r_k^V-1} V_i, \quad \text{for } k\geq 1. 
\]

\begin{lem}[Theorems 2.1 and 2.2 in \cite{kmLLCRW}]\label{lem:reght}
 If $\d>0$, then there exist constants $C_1,C_2 > 0$ such that 
\[
 P_V(r_1^V > n) \sim C_1 n^{-\d}, \quad \text{as } n\ra\infty, 
\]
and 
\[
 P_V(S_1^V > n) \sim C_2 n^{-\d/2}, \quad \text{as } n\ra\infty. 
\]
\end{lem}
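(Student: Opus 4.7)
The plan is to reduce both tail asymptotics to the tail of the excursion maximum $M^V := \max_{0 \leq i < r_1^V} V_i$ of the backward branching process started at $V_0 = 0$. The heuristic, consistent with Theorem \ref{th:bbpdpl}, is that an excursion reaching height $k$ takes $\Theta(k)$ generations to return to $0$ and accumulates $\Theta(k^2)$ total offspring. Thus $\{r_1^V > n\}$ corresponds, up to bounded constants, to $\{M^V > cn\}$, while $\{S_1^V > n\}$ corresponds to $\{M^V > c\sqrt{n}\}$; combined with a tail estimate of the form $P_V(M^V > n) \sim C_0 n^{-\d}$ this yields the exponents $\d$ and $\d/2$ respectively.

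The first step is to establish $P_V(M^V > n) \sim C_0 n^{-\d}$. Taylor-expanding $f(k) = k^{\d}$ against the drift $1-\d$ and variance $\sim 2k$ of the increments of $V_i$ (recorded just before Theorem \ref{th:bbpdpl}) one checks that the leading $O(k^{\d-1})$ contributions from $f'(k)\,\E[\Delta V]$ and $\tfrac{1}{2} f''(k)\,\Var(\Delta V)$ cancel exactly, so that $V_i^\d$ is a martingale up to an $O(k^{\d-2})$ per-step error. Optional stopping between the first passage above a moderate level $v$ and the subsequent return to $0$ then yields $P_V^v(\tau_n^V < \s_0^V) = (v/n)^{\d}(1+o(1))$. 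Since $V_1$ (started at $V_0 = 0$) has exponential tails --- its conditional law is geometric in the finitely many cookies at the origin --- averaging over $V_1$ produces $P_V(M^V > n) \sim C_0 n^{-\d}$.

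Next, applying Theorem \ref{th:bbpdpl} to the branching process restarted at $V_{\tau_n^V}$ (which by Step 1 is within a bounded factor of $n$) shows that, after rescaling space and time by $n$, the remainder of the excursion converges in distribution to $Y_{1-\d}(\cdot \wedge \s_0^{Y_{1-\d}})$ with $Y_{1-\d}(0) = 1$. In particular, conditional on $\{M^V \geq n\}$, both $r_1^V/n$ and $S_1^V/n^2$ converge in distribution to tight non-degenerate limits. Coupled with Step 1 and a routine argument showing that $P_V(r_1^V > n,\, M^V \leq \varepsilon n)$ is $o(n^{-\d})$ for small $\varepsilon>0$ (iterate the hitting estimate over successive downcrossings), and similarly for $S_1^V$ against $\{M^V \leq \varepsilon \sqrt n\}$, one obtains $P_V(r_1^V > n) \sim C_1 n^{-\d}$ and $P_V(S_1^V > n) \sim C_2 n^{-\d/2}$; the constants $C_1, C_2$ are identified in terms of $C_0$ and the laws of $\s_0^{Y_{1-\d}}$ and $\int_0^{\s_0^{Y_{1-\d}}} Y_{1-\d}(t)\,dt$ started at $1$.

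The main obstacle is upgrading the polynomial bounds from the approximate-martingale argument to genuine asymptotic \emph{equivalences}. Optional stopping for $V_i^\d$ accumulates additive corrections of order $\sum_i V_i^{\d-2}$, and these must be shown negligible relative to the main term $(v/n)^\d$ uniformly in $v$. I would control this by restricting to $V_i \geq \log n$ (below which an exponential bound shows the excursion returns to $0$ quickly without recrossing upward) and iterating the approximate-martingale bound across dyadic levels $2^j$ with $\log_2 \log n \leq j \leq \log_2 n$, the resulting errors summing geometrically to a term of smaller order than the leading $n^{-\d}$. A possibly cleaner alternative would be to exploit the regeneration structure of $V$ introduced above to write a renewal equation for the tail of $M^V$ and invoke a Tauberian theorem, which automatically delivers the sharp constant.
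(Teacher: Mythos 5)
The paper does not actually prove this lemma: it is imported wholesale from Theorems 2.1 and 2.2 of \cite{kmLLCRW}, so the only meaningful comparison is with that reference. Your outline — sharp tail asymptotics for the excursion maximum $\max_{i<r_1^V}V_i$, then a squared-Bessel--type diffusion approximation for the shape of a high excursion to convert that tail into the tails of the duration $r_1^V$ and the progeny $S_1^V$ — is in fact the same route taken there: the diffusion approximation is their Lemma 3.1 (Theorem \ref{th:bbpdpl} here), the dyadic hitting estimates via near-harmonic functions are their Section 5 (echoed in Lemma \ref{fbphitprob}), and the sharp maximum-tail asymptotics $P_V(\tau_m^V<r_1^V)\sim \Ctr m^{-\delta}$ is their Lemma 8.1, quoted in this paper as \eqref{PVtr}.

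The genuine gap is in your Step 1, which is precisely the nontrivial content of the cited theorems. Optional stopping for the approximate martingale $V_i^{\delta}$ can only give two-sided bounds $P_V^v(\tau_n^V<\sigma_0^V)\asymp v^{\delta}n^{-\delta}$ on dyadic scales (this is exactly what the sub/superharmonic functions $h_\ell^{\pm}$ of Lemma \ref{fbphitprob} deliver, and their ratio does not tend to $1$ at any fixed base level); it does not yield existence of the limit $\lim_n n^{\delta}P_V^v(\tau_n^V<\sigma_0^V)$, and your claimed form $(v/n)^{\delta}(1+o(1))$ cannot be correct uniformly down to the bounded values $v$ that matter after averaging over $V_1$: the drift/variance cancellation behind the near-harmonicity of $v^\delta$ holds only for $v\ge M$, so for $v=O(1)$ the correct prefactor is some harmonic-type function $h(v)$, not $v^{\delta}$, and your averaging step would produce an unjustified constant. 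Your proposed handling of the bottom of the excursion is also wrong as stated: from levels below $\log n$ the process does recross upward (each such visit reaches level $n$ before $0$ with probability of order $((\log n)/n)^{\delta}$, and there are many such visits per excursion), so no exponential bound lets you ignore that region; extracting the sharp constant requires a regeneration/renewal analysis of successive descents below a fixed level, or the pinching of the $h_\ell^{\pm}$ bounds as $\ell\to\infty$ — the ``renewal equation plus Tauberian theorem'' alternative is named but never constructed. Secondary, repairable gaps: Theorem \ref{th:bbpdpl} only controls the process stopped at level $\e n$, so deducing limit laws for $r_1^V/n$ and $S_1^V/n^2$ conditioned on a high maximum additionally needs (i) an overshoot estimate $V_{\tau_n^V}=n(1+o(1))$, (ii) a bound showing the final descent from level $\e n$ to $0$ contributes only $O(\e n)$ time and $O(\e^2 n^2)$ progeny, and (iii) uniform integrability to identify $C_1,C_2$; none of these is supplied. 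As it stands the proposal recovers the correct exponents $\delta$ and $\delta/2$ but not the asymptotic equivalences with constants, which is what the lemma asserts.
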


\begin{rem}
 While regeneration times do not exist for the forward branching processes, there are similar tail asymptotics for the lifetime and total progeny for the forward branching processes in \cite[Theorem 21]{kzEERW}. However, those asymptotics will not be needed in the present paper. 
\end{rem}

Since the excited random walks in the current paper are assumed to have $\d>1$, Lemma \ref{lem:reght} implies that $E_V[r_1^V] < \infty$. Since this parameter will arise frequently in the proofs below, for convenience of notation we will denote the mean regeneration time of the backward branching process by $\br = E_V[r_1^V]$. 

\section{Superdiffusive slowdowns}\label{sec:super}

Having given the necessary background on the forward and backward branching processes associated to the excited random walk, we are now ready to give the proofs of the main results of the paper. 
In this section we will prove part \ref{th:superpart} of Theorem \ref{th:ERWxsd}. That is, we will compute the asymptotics of $P(X_n < n^\gamma)$ and $P(T_{n^\gamma} > n)$ when $\gamma \in (1/2, \d/2 \wedge 1)$. 

We first study the asymptotics of $P(T_{n^\gamma} > n)$ using the backward branching process as a key tool. 
It follows from \eqref{TnVrel} that $T_{n^\gamma}$ stochastically dominates $\fl{n^\gamma} + 2 \sum_{i=0}^{\fl{n^\gamma}} V_i$ when $V_0 = 0$. If the $k$-th regeneration time $r_k^V$ occurs by the $\fl{n^\gamma}$-th generation of the branching process then $\sum_{i=0}^{\fl{n^\gamma}} V_i \geq \sum_{i=0}^{r_k^V} V_i =  \sum_{j=1}^k S_j^V$. Thus, we can conclude for any $s>0$ that 
\begin{equation}\label{TnVlb}
 P( T_{n^\gamma} > n ) \geq 
P_V\left( \sum_{j=1}^{\fl{sn^{\gamma}}} S_j^V > \frac{n - \fl{n^\gamma}}{2} \right) - P_V\left( r_{\fl{sn^\gamma}}^V > \fl{n^\gamma} \right). 
\end{equation}
To compute the asymptotics of the two probabilities on the right side above we will need the following Lemma. 
\begin{lem}\label{lem:htsums}
 Let $\xi_1,\xi_2,\xi_3,\ldots$ be i.i.d.\ non-negative random variables with $P(\xi_1 > t) \sim \Cx t^{-\a}$ as $t\ra\infty$ for some $\Cx>0$ and $\a>0$. 
\begin{enumerate}
 \item If $\a>1$ so that $\bar\xi:=E[\xi_1] < \infty$, then for any $x > \bar\xi$, \label{lem:htsumspt1}
\[
  P\left( \sum_{i=1}^{n} \xi_i > x n \right) 
\sim \Cx (x-\bar\xi)^{-\a} n^{1-\a}, \quad \text{as } n\ra\infty. 
\]
 \item If $0<\gamma < \a \wedge 1$, then \label{lem:htsumspt2}
\[
 P\left( \sum_{i=1}^{n^{\gamma}} \xi_i > x n \right) 
\sim \Cx x^{-\a} n^{\gamma-\a}, \quad \text{as } n\ra\infty. 
\]
\end{enumerate}
\end{lem}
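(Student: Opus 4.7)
Both parts are instances of the classical ``principle of one big jump'' for sums of i.i.d.~regularly varying random variables: when the threshold $T$ is far above the typical value of $S_N := \sum_{i=1}^N \xi_i$, the event $\{S_N > T\}$ is dominated by the contribution of a single unusually large summand. The target in each case is $P(S_N > T) \sim N \cdot P(\xi_1 > T^*)$, where $T^*$ equals $T$ minus the typical value of $S_{N-1}$: in part (i), $T^* = (x-\bar{\xi})n$, while in part (ii), since $\gamma < \alpha \wedge 1$ forces the typical size of the sum of $n^\gamma - 1$ copies of $\xi$ to be $o(n)$, we simply have $T^* \sim xn$.

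For the lower bound in part (ii), the non-negativity of $\xi$ gives the one-line bound
\[
 P(S_N > T) \;\geq\; P\big(\exists\,i:\, \xi_i > xn\big) \;\geq\; N\,P(\xi_1 > xn) - \binom{N}{2} P(\xi_1 > xn)^2,
\]
and the quadratic correction is $O(n^{2\gamma - 2\alpha}) = o(n^{\gamma-\alpha})$ since $\gamma < \alpha$. For part (i), this crude bound loses the factor $(x-\bar{\xi})^{-\alpha}$; instead, fix $\epsilon>0$, set $T^*_+ := (x-\bar{\xi}+\epsilon)n$, and use the disjoint events
\[
 A_i := \left\{ \xi_i > T^*_+,\ \xi_j \leq T^*_+\ \forall j \neq i,\ \textstyle\sum_{j \neq i} \xi_j > T - T^*_+ \right\}.
\]
Independence factorizes $P(A_i)$, the weak law of large numbers makes the third condition almost sure, and the tail hypothesis gives $N\,P(\xi_1 > T^*_+) \sim C_0 (x-\bar{\xi}+\epsilon)^{-\alpha} n^{1-\alpha}$; letting $\epsilon \downarrow 0$ yields the lower bound.

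The upper bound decomposes as
\[
 P(S_N > T) \;\leq\; N\,P(\xi_1 > T^*_-) + P\left( S_N > T,\ \max_{i \leq N} \xi_i \leq T^*_- \right),
\]
with $T^*_-$ chosen slightly below $T^*$. The first term already matches the claimed asymptotic up to an $\epsilon$-correction. The main obstacle is controlling the second (``no big jump'') term: after truncating the $\xi_i$'s at $T^*_-$, the mean of the truncated sum lies strictly below $T$ in the relevant scale, yet a direct application of Markov's or Chebyshev's inequality turns out to give exactly the same polynomial order as the main term (in both parts, in the regimes $\alpha \in (1,2]$ and, for part (ii), also $\alpha \leq 1$). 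Extracting a bound of strictly smaller order therefore requires the Fuk--Nagaev inequality, or an equivalent high-moment estimate for the recentered, truncated sum, to gain an additional polynomial factor. This is the technical heart of the proof but uses only standard tools from heavy-tailed large deviation theory; I expect the actual write-up to follow the Nagaev--Heyde calculation (or cite the Denisov--Dieker--Shneer principle of one big jump) and then simply read off the sharp constant $C_0 (x-\bar\xi)^{-\alpha}$ needed in Remark~\ref{rm:ldsdasym}.
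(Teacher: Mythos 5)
The paper does not actually prove this lemma: it invokes Theorems 2.6.1, 3.4.1 and 4.4.1 of the Borovkov--Borovkov monograph (and, for the boundary cases $\alpha\in\{1,2\}$, several corollaries there), so your closing fallback --- ``cite a standard one-big-jump theorem'' --- is precisely what the author does. Your lower bounds are correct as written: the inclusion--exclusion bound for part (ii) and the disjoint events $A_i$ together with the law of large numbers for part (i) do yield the sharp constants after $\epsilon\downarrow 0$.

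The gap is in the upper bound, specifically for $\alpha>2$, a case the paper genuinely needs (the lemma is applied with $\alpha=\delta$ and $\alpha=\delta/2$, and $\delta>1$ is arbitrary, e.g.\ $\delta>4$). With the truncation level $T^*_-$ taken ``slightly below'' $T^*$, the Fuk--Nagaev exponent is $z/y\approx 1+\eta$, and $\eta$ must be small because the one-jump term $N\,P(\xi_1>T^*_-)$ has the sharp constant only as $\epsilon\downarrow 0$; the Fuk--Nagaev base is of order $N\,E[\xi_1^2\ind{\xi_1\le T^*_-}]/(zy)$. For $\alpha\le 2$ this base is comparable to the main term, so an exponent barely above $1$ does give a strictly smaller order, as you assert. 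But for $\alpha>2$ the base is of order $n^{-1}$ in part (i) (respectively $n^{\gamma-2}$ in part (ii)), which is far larger than $n^{1-\alpha}$ (respectively $n^{\gamma-\alpha}$), and raising it to the power $1+\eta$ with $\eta\to 0$ cannot beat the main term; so your ``no big jump'' estimate fails exactly where Chebyshev also fails to be improved upon. The standard repair is a three-way decomposition with a \emph{low} truncation level $\beta n$, $\beta$ small but fixed: (a) some $\xi_i>(x-\bar\xi-\epsilon)n$, costing $N\,P(\xi_1>(x-\bar\xi-\epsilon)n)$, which gives the sharp constant as $\epsilon\downarrow 0$; (b) exactly one $\xi_i\in(\beta n,(x-\bar\xi-\epsilon)n]$, which forces the remaining sum above $(\bar\xi+\epsilon)n$, an event of probability $o(1)$ by the LLN, so this contributes $o(n^{1-\alpha})$; (c) two or more $\xi_i>\beta n$, of probability $O(n^{2-2\alpha})=o(n^{1-\alpha})$; (d) all $\xi_i\le\beta n$, where Fuk--Nagaev now has exponent of order $(x-\bar\xi)/\beta$, which can be made larger than $\alpha-1$ by choosing $\beta$ small (similarly for part (ii)). Either carry out this decomposition or, as the paper does, cite the uniform Nagaev-type results, which cover all $\alpha>0$ including the boundary cases.
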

\begin{rem}
 Sums of i.i.d.\ random variables are very well studied and much stronger results on the asymptotics of sums of heavy tailed random variables are contained in \cite{bbAARW}. 
In fact, Lemma \ref{lem:htsums} follows from Theorems 2.6.1, 3.4.1, and 4.4.1 in \cite{bbAARW} in the cases $\a\in (0,1)$, $\a \in (1,2)$, and $\a >2$, respectively. 
In the boundary cases $\a=1$ or $\a=2$, the asymptotics in Lemma \ref{lem:htsums} can be deduced from Corollaries 2.2.4, 2.5.2, and 3.1.7 and Theorem 3.3.1 in \cite{bbAARW}. 
\end{rem}

Lemma \ref{lem:htsums} is applicable to the right side of \eqref{TnVlb} since the  $\{S_j^V\}_{j\geq 1}$ are i.i.d.\ and the $k$-th regeneration time $r_k^V$ is the sum of $k$ independent copies of $r_1^V$. 
Therefore, from Lemmas \ref{lem:reght} and \ref{lem:htsums} we can conclude that if $\gamma < \d/2 \wedge 1$ then
\begin{equation}\label{SVsum}
 P_V\left( \sum_{j=1}^{\fl{sn^{\gamma}}} S_j^V > \frac{n - \fl{n^\gamma}}{2} \right) \sim C_2 s 2^{\d/2} n^{\gamma-\d/2} , \quad \text{as } n\ra\infty. 
\end{equation}
Similarly, recalling the definition of $\br = E_V[r_1^V]$, we have that if $1/s > \br$ then 
\begin{equation}\label{rktail}
 P_V\left( r_{\fl{sn^\gamma}}^V > \fl{n^\gamma} \right) \sim C_1 s (1-s\br)^{-\d} n^{\gamma(1-\d)}, \quad \text{as } n\ra\infty.
\end{equation}
Note that $n^{\gamma(1-\d)} = o(n^{\gamma-\d/2})$ when $\gamma > 1/2$. Therefore, by applying \eqref{SVsum} and \eqref{rktail} to \eqref{TnVlb} and optimizing over $s$ we can conclude that 
\begin{equation}\label{superTnlb}
 \liminf_{n\ra\infty} n^{-\gamma+\d/2} P( T_{n^\gamma} > n ) \geq \frac{C_2 2^{\d/2}}{\br}, \qquad \forall \gamma \in (1/2,\d/2 \wedge 1). 
\end{equation}

A corresponding upper bound for $P(T_{n^\gamma} > n)$ is obtained similarly. 
First, note that the coupling in \eqref{VZcouple} implies that 
$\sum_{i\geq 1} Z_i^{(n^\gamma)} \leq \sum_{i=\fl{n^\gamma}+1}^{r_k^V} V_i$ if the $k$-th regeneration time $r_k^V > n^\gamma$. 
Therefore, from \eqref{TnVrel} we can conclude that 
\begin{equation}\label{TnVub}
 P(T_{n^\gamma} > n ) \leq P_V\left( \fl{n^\gamma} + 2 \sum_{j=1}^{\fl{sn^\gamma}} S_j^V  > n \right) + P_V\left( r_{\fl{sn^\gamma}}^V \leq n^\gamma \right).
\end{equation}
The asymptotics of the first probability on the right are given by \eqref{SVsum}. For the second probability on the right, since $r_{\fl{sn^\gamma}}^V$ is the sum of $\fl{sn^\gamma}$ i.i.d.\ non-negative random variables with finite mean, we can conclude from Cramer's Theorem that  
\begin{equation}\label{rklefttail}
 \lim_{n\ra\infty} \frac{1}{n^\gamma} \log  P_V\left( r_{\fl{sn^\gamma}}^V \leq n^\gamma \right) < 0, \quad \text{for any } s > 1/\br. 
\end{equation}
Therefore, applying \eqref{SVsum} and \eqref{rklefttail} to \eqref{TnVub} and optimizing over $s$ we can conclude that 
\begin{equation}\label{superTnub}
 \limsup_{n\ra\infty} n^{-\gamma+\d/2} P( T_{n^\gamma} > n ) \leq \frac{C_2 2^{\d/2}}{\br}, \qquad \forall \gamma < \d/2 \wedge 1. 
\end{equation}
Combining \eqref{superTnlb} and \eqref{superTnub} proves part Theorem \ref{th:ERWxsd}\ref{th:superpart} for the hitting times
with $A = \frac{C_2 2^{\d/2}}{\br}$. 

We now move to the proof of Theorem \ref{th:ERWxsd}\ref{th:superpart} for $X_n$. A lower bound for the asymptotics follows easily from the above asymptotics for the hitting time slowdowns. Indeed, since $\{X_n < n^\gamma\} \supset \{ T_{\fl{n^\gamma}} > n \}$, we can conclude from \eqref{superTnub} that 
\[
 \liminf_{n\ra\infty} n^{-\gamma +\d/2} P(X_n < n^\gamma ) \geq \frac{C_2 2^{\d/2}}{\br} \qquad \forall \gamma \in (1/2,\d/2 \wedge 1). 
\]
The corresponding upper bound is slightly more complicated since the event $\{X_n < n^\gamma \}$ can occur even if the random walk has travelled very far past $n^\gamma$ in the first $n$ steps. 
However, since the random walk is transient to the right, it is unlikely that the random walk backtracks too far during the first $n$ steps. To make this precise, let $\e>0$ and note that 
\[
 P(X_n < n^\gamma) \leq P\left( T_{(1+\e)n^\gamma}  > n \right) + P\left( \inf_{k > T_{(1+\e)n^\gamma}} X_k \leq n^\gamma \right). 
\]
It was shown in \cite[Lemma 6.1]{pERWLDP} that there exists a $C>0$ such that $P\left( \inf_{k\geq T_{n+m}} X_k \leq n \right) \leq C m^{1-\d}$ for all $n,m\geq 1$. From this it follows that the second probability on the right above is $\bigo( n^{\gamma(1-\d)})$ for any fixed $\e>0$.
Since $n^{\gamma(1-\d)} = o(n^{\gamma - \d/2})$ when $\gamma > 1/2$, we can conclude (by repeating the argument leading to \eqref{superTnub}) that 
\[
 \limsup_{n\ra\infty} n^{-\gamma+\d/2} P(X_n < n^\gamma) \leq \limsup_{n\ra\infty} n^{-\gamma+\d/2}P\left( T_{(1+\e)n^\gamma}  > n \right) = \frac{C_2 2^{\d/2}(1+\e)}{\br}, 
\]
for any $\gamma \in (1/2,\d/2\wedge 1)$ and $\e>0$. Taking $\e\ra 0$ completes the proof of Theorem \ref{th:ERWxsd}\ref{th:superpart} for the position $X_n$ of the excited random walk. 

\begin{rem}\label{rm:ldsdasym}
 The above proof of Theorem \ref{th:ERWxsd}\ref{th:superpart} uses essentially the same arguments as the proof in \cite{pERWLDP} of the large deviation slowdown asymptotics \eqref{eq:ldsd}. 
However, Lemma \ref{lem:htsums} above gives better control on the large deviations of sums of heavy tailed random variables than what was used in \cite{pERWLDP}. 
By applying Lemma \ref{lem:htsums} to the proof in \cite{pERWLDP} one can easily obtain 
following improved asymptotics for the large deviation slowdowns: if $\d>2$, then for any $v \in (0,\vp)$, 
\[
 \lim_{n\ra\infty} n^{\d/2-1} P(X_n < nv) =  \lim_{n\ra\infty} n^{\d/2-1} P(T_{n/v} > n) = \frac{C_2 (2\vp)^{\d/2}}{\br} v(\vp - v)^{-\d/2}, \quad \forall v \in (0,\vp). 
\]
\end{rem}

\section{Diffusive and sub-diffusive slowdowns for hitting times}\label{sec:subT}
In this section we will prove part \ref{th:subpart} of Theorem \ref{th:ERWxsd} for the hitting times. That is, we will show that 
\[
 P(T_{n^\gamma} > n ) \asymp n^{2\gamma - \frac{1+\d}{2}}, \quad \forall \gamma \in (0,1/2]. 
\]

\subsection{Lower bound}
The connection between hitting times and branching processes in \eqref{TnVrel} implies that $T_{n^\gamma}$ stochastically dominates $2\sum_{i\geq 0} Z^{(n^\gamma)}_i$, where for $n$ fixed $\{Z^{(n^\gamma)}_i\}_{i\geq 0}$ is a left forward branching process with initial condition given by $Z_0^{(n^\gamma)} = V_{n^\gamma}$.  Therefore, we can conclude that 
\begin{equation}\label{Tnsublb}
 P(T_{n^\gamma} > n ) \geq P_V\left( \sum_{i=0}^\infty Z_i^{(n^\gamma)} > n \right) \geq P_V( V_{n^\gamma} \geq \fl{ n^\gamma} ) P_Z^{\fl{n^\gamma}}\left( \sum_{i=0}^\infty Z_i > n \right),
\end{equation}
where in the second inequality we used the monotonicity of the branching processes with respect to the initial condition from Lemma \ref{bpmono}. 

To evaluate the probabilities on the right in \eqref{Tnsublb}, we will use the following two lemmas.
\begin{lem}\label{PVnn}
 If $\d>1$, 
then there exists a constant $c>0$ such that $P_V(V_n \geq n ) \geq c n^{1-\d}$ for all $n$ large enough. 
\end{lem}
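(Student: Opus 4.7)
The goal is to show $P_V(V_n \geq n) \geq c n^{1-\d}$. The heuristic is that since $\d>1$ the chain $V$ is positive recurrent, and its stationary distribution $\pi$ satisfies $\pi([n,\infty)) \asymp n^{1-\d}$: a single excursion visits heights $\geq n$ with probability $\asymp n^{-\d}$ and, conditional on doing so, spends time of order $n$ there, so by $\pi(x) = E_V[\sum_{i<r_1^V}\indd{V_i=x}]/\br$ the stationary tail is $\asymp n^{1-\d}$. One then expects $P_V(V_n \geq n) \approx \pi([n,\infty))$ by mixing. My plan is to turn this into a rigorous lower bound via a regenerative decomposition combined with the diffusion limit of Theorem \ref{th:bbpdpl}.

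Conditioning on the most recent regeneration time before $n$ yields
\[
P_V(V_n \geq n) \;=\; (u \ast g)(n) \;=\; \sum_{j=0}^n u(j)\,g(n-j),
\]
where $u(j) = P_V(j \text{ is a regeneration time of } V)$ and $g(m) = P_V(V_m \geq n,\, r_1^V > m)$. Lemma \ref{lem:reght} gives $\br < \infty$, so Blackwell's renewal theorem (together with the obvious aperiodicity of $r_1^V$) yields $u(j) \to 1/\br$, and hence $u(j) \geq 1/(2\br)$ for all $j$ larger than some fixed $J_0$. Consequently it suffices to prove $\sum_{k \leq n/2} g(k) \gtrsim n^{1-\d}$, since then for $n$ large every $k \leq n/2$ has $n-k \geq n/2 \geq J_0$ and so $(u \ast g)(n) \geq (2\br)^{-1}\sum_{k \leq n/2} g(k) \gtrsim n^{1-\d}$.

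To bound $\sum_{k \leq n/2} g(k)$ from below, fix a large constant $A$ and consider the event
\[
B_n \;=\; \{\tau_{An}^V \leq n/8\} \cap \{\tau_{An}^V < r_1^V\} \cap \{\s_n^V \circ \th_{\tau_{An}^V} \geq 3n/8\},
\]
that the first excursion of $V$ reaches height $An$ within $n/8$ steps and then sustains itself above level $n$ for a further $3n/8$ steps. On $B_n$ every $k \in [\tau_{An}^V,\, \tau_{An}^V + 3n/8] \subset [0, n/2]$ satisfies $V_k \geq n$ and $r_1^V > k$, whence $\sum_{k \leq n/2} g(k) \geq (3n/8)\,P_V(B_n)$. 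By the strong Markov property at $\tau_{An}^V$ together with monotonicity in the initial condition (the BBP version of Lemma \ref{bpmono}),
\[
P_V(B_n) \;\geq\; P_V\bigl(\tau_{An}^V \leq n/8,\, \tau_{An}^V < r_1^V\bigr)\cdot \inf_{y \geq An} P_V^y\bigl(\s_n^V \geq 3n/8\bigr).
\]
The second factor is bounded below by a positive constant through Theorem \ref{th:bbpdpl}: for $A$ large enough the diffusion $Y_{1-\d}$ with $Y(0)=A$ does not reach level $1$ by time $3/8$ with positive probability, and the functional convergence transfers this to $V$. The first factor is bounded below by $c\,n^{-\d}$ by combining the BBP analog of Lemma \ref{fbphitprob} (yielding $P_V(\tau_{An}^V < r_1^V) \asymp n^{-\d}$, after using non-degeneracy to produce a positive starting level $V_1 \geq 2^{\ell_0}$ with positive probability) with a scaling argument showing that, conditional on reaching $An$, the rescaled hitting time $\tau_{An}^V/n$ has a nondegenerate limit and therefore lands in $[0, 1/8]$ with positive probability.

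The main technical obstacle lies in balancing the choice of $A = A(\d)$: the diffusion step needs $A$ large enough that the drift $(1-\d)<0$ does not push $Y_{1-\d}$ below $1$ before time $3/8$, yet not so large that conditioning on the rare event $\{\tau_{An}^V < r_1^V\}$ concentrates the rescaled hitting time far above $1/8$. Reconciling these requires a conditioned diffusion-limit estimate in the spirit of Theorem \ref{th:fbpdpl} for forward branching processes conditioned to die out, and I expect this scaling limit for the rescaled hitting time to be the most delicate ingredient.
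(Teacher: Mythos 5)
Your skeleton — the renewal decomposition $P_V(V_n\geq n)=\sum_j u(j)g(n-j)$ with $u(j)\to 1/\br$ by Blackwell's theorem, reducing the problem to $\sum_{k\leq n/2}P_V(V_k\geq n,\,r_1^V>k)\gtrsim n^{1-\d}$, and then the factorization into (climb to level $An$ early in the first excursion) $\times$ (sojourn above $n$ for time $3n/8$) — is structurally sound, and the sojourn factor is indeed a routine consequence of Theorem \ref{th:bbpdpl} plus monotonicity (note, in fact, that any fixed $A>1$ works there; no largeness of $A$ is needed, so the "balancing" tension you describe is illusory). The bound $P_V(\tau_{An}^V<r_1^V)\geq c\,n^{-\d}$ is also available (it is exactly the asymptotics \eqref{PVtr} quoted from \cite[Lemma 8.1]{kmLLCRW}). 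The genuine gap is the one estimate you flag but do not prove: that conditionally on $\{\tau_{An}^V<r_1^V\}$ the climb is fast, i.e.\ $P_V(\tau_{An}^V\leq n/8,\ \tau_{An}^V<r_1^V)\geq c\,n^{-\d}$. You justify this by appealing to a scaling limit for the excursion conditioned to reach level $An$ before returning to $0$; no such result exists in the paper or in the cited literature as used here — Theorem \ref{th:fbpdpl} concerns the \emph{forward} process under a \emph{different} conditioning ($\s_0^W<\infty$), and Theorem \ref{th:bbpdpl} started from $0$ says nothing about this rare event. This timing control is not a technicality you can wave at: your renewal reduction forces $k\leq n$ (indeed $k\leq n/2$ or at least $k\leq n-J_0$, and the tail $\sum_{k>n-J_0}P_V(r_1^V>k)$ is itself of order $n^{1-\d}$, so you cannot drop the time restriction), hence some quantitative statement that the successful excursion spends its time above $n$ within the first constant fraction of $n$ steps is unavoidable in your route, and it is precisely the ingredient you have not supplied.

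For contrast, the paper's proof of this lemma sidesteps the conditioned-excursion timing entirely: it lower-bounds $P_V(\tau_{\e n}^V\leq n/2)$ by letting \emph{many} ($\fl{sn}$) excursions attempt to reach the much lower level $\e n$, writing $P_V(\tau_{\e n}^V\leq n/2)\geq P_V(\max_{i\leq r_{\fl{sn}}^V}V_i\geq \e n)-P_V(r_{\fl{sn}}^V>n/2)$; both terms are of order $n^{1-\d}$, and choosing $s$ small and then $\e$ small makes the first constant ($\Ctr s\e^{-\d}$) beat the second, so no information about when the successful climb occurs inside its excursion is needed. It then runs the diffusion limit from level $\e n$ to force $V_i\geq n$ on $[n/2,n]$. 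If you want to salvage your approach, you would either need to prove the conditioned hitting-time bound directly (new work, essentially an $h$-transform analogue of the results in \cite{kzEERW} for this conditioning), or restructure the single-excursion event in the spirit of the paper's many-excursion trick, at which point you have essentially reproduced the paper's argument.
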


\begin{lem}\label{Zsumtail}
For any $0<a < b < 1/2$, there are constants $c,C>0$ such that for all $n$ large enough, 
 \[
  c z^{1+\d} n^{-\frac{1+\d}{2}} \leq P_Z^z\left( \sum_{i=0}^\infty Z_i > n \right)
\leq C z^{1+\d} n^{-\frac{1+\d}{2}},
\quad \forall z \in (n^a,n^b). 
 \]
\end{lem}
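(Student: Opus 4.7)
The key heuristic, inherited from the diffusion limit in Theorem \ref{th:fbpdpl}, is that the event $\{\sum_i Z_i > n\}$ is driven by the branching process reaching a maximum of order $\sqrt n$: from such a height $Z$ survives for time of order $\sqrt n$ and accumulates total progeny of order $n$ with probability bounded away from $0$. The prefactor $z^{1+\delta} n^{-(1+\delta)/2}$ is then exactly the probability of reaching level $\sqrt n$ from $z$, which reflects the fact that $y \mapsto y^{1+\delta}$ is harmonic for the limiting diffusion $Y_{-\delta}$ (this is the $\delta+1$ that appears in Lemmas \ref{fbphitprob}--\ref{fbphpasymp}).

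\textbf{Lower bound.} Fix $K \geq 1$ (to be chosen large) and apply the strong Markov property at $\tau^Z_{K\sqrt n}$:
\[
P_Z^z\Bigl(\sum_i Z_i > n\Bigr) \geq P_Z^z\bigl(\tau^Z_{K\sqrt n} < \sigma^Z_0\bigr)\cdot P_Z^{K\sqrt n}\Bigl(\sum_i Z_i > n\Bigr).
\]
For the first factor, take $m = \lfloor \log_2 z\rfloor$, $u = \lceil \log_2(K\sqrt n)\rceil$, and $\ell = \ell_0$ in the lower bound of Lemma \ref{fbphitprob}; combining with the monotonicity of Lemma \ref{bpmono} and the trivial containment $\{\tau^Z_{K\sqrt n} < \sigma^Z_{2^{\ell_0}}\} \subseteq \{\tau^Z_{K\sqrt n} < \sigma^Z_0\}$ gives a lower bound of order $z^{1+\delta}(K\sqrt n)^{-(1+\delta)}$. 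For the second factor, apply Theorem \ref{th:fbpdpl} with scaling parameter $\sqrt n$ and initial ratio $y = K$: $n^{-1}\sum_i Z_i$ converges in law to $\int_0^{\sigma_0^{Y_{-\delta}}} Y_{-\delta}(s)\,ds$ starting from $K$, and by the scaling $Y_{-\delta}(\cdot)$ started at $K$ equals in law $K \tilde Y(\cdot/K)$ started at $1$, this integral is distributed as $K^2 J_1$ with $J_1 > 0$ almost surely, so the probability that it exceeds $1$ tends to $1$ as $K \to \infty$. Taking $K$ large gives the desired bound.

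\textbf{Upper bound and main obstacle.} Split:
\[
P_Z^z\Bigl(\sum_i Z_i > n\Bigr) \leq P_Z^z\bigl(\tau^Z_{K\sqrt n} < \sigma^Z_0\bigr) + P_Z^z\bigl(\sigma^Z_0 > \sqrt n/K\bigr),
\]
using that on $\{\max_i Z_i \leq K\sqrt n\}$ the bound $\sum_i Z_i \leq \sigma^Z_0 \cdot K\sqrt n$ forces $\sigma^Z_0 \geq \sqrt n/K$. The first term is bounded by $C K^{-(1+\delta)} z^{1+\delta} n^{-(1+\delta)/2}$ using the upper bound in Lemma \ref{fbphitprob} combined with the strong Markov property and Lemma \ref{fbphpasymp} (to control excursions that descend below $2^{\ell_0}$ before a subsequent ascent to $K\sqrt n$; the factor $z^{1+\delta}\geq 1$ absorbs the $z$-independent term). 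The \emph{main obstacle} is the second term, which requires a sharp tail estimate of the form
\[
P_Z^z\bigl(\sigma^Z_0 > s\bigr) \leq C\, z^{1+\delta}\, s^{-(1+\delta)}, \qquad 1 \leq z \leq c\, s.
\]
This is the branching-process analog of the tail of the hitting time of $0$ for the squared Bessel process of dimension $-2\delta$, whose explicit density $\frac{y^{1+\delta}}{\Gamma(1+\delta)} t^{-\delta-2} e^{-y/t}$ yields precisely $P_y(\sigma_0 > s) \sim C y^{1+\delta} s^{-(1+\delta)}$. To establish it directly for $Z$, I would adapt the super- and sub-harmonic function construction from Lemma 5.3 of \cite{kmLLCRW} (which underlies Lemma \ref{fbphitprob}) using the approximate supermartingale $(Z_i + \text{const})^{1+\delta}$, combined with an excursion decomposition to control the survival time uniformly in $z$. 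Once this estimate is available, the second term is bounded by $C K^{1+\delta} z^{1+\delta} n^{-(1+\delta)/2}$, and the two terms together yield the upper bound.
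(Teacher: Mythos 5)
Your lower bound is essentially the paper's: hit level of order $\sqrt n$ (cost $\asymp z^{1+\d}n^{-(1+\d)/2}$ by Lemma \ref{fbphitprob} plus monotonicity), then use the diffusion limit of Theorem \ref{th:fbpdpl} to show the total progeny from height $\asymp\sqrt n$ exceeds $n$ with probability bounded below; whether you start from $\sqrt n$ or $K\sqrt n$ is immaterial. Your first term in the upper bound is also handled as in the paper. The problem is the second term. Your decomposition reduces the upper bound to the uniform lifetime tail estimate $P_Z^z(\s_0^Z > s) \leq C\,z^{1+\d}s^{-(1+\d)}$ for $1\leq z\leq cs$, and you explicitly do not prove it: you only indicate that you ``would adapt'' the super/subharmonic construction of \cite[Lemma 5.3]{kmLLCRW} with the approximate supermartingale $(Z_i+\mathrm{const})^{1+\d}$ plus ``an excursion decomposition.'' That supermartingale argument yields \emph{spatial} exit probabilities (reach a high level before $0$), which is exactly Lemma \ref{fbphitprob}; it does not by itself control a \emph{temporal} event like $\{\s_0^Z > s\}$. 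To convert space into time you must rule out the process surviving for time $s$ while staying at heights $\ll s$, i.e.\ you need occupation-time bounds at each scale — and that is precisely the machinery the paper invokes (\cite[Lemma 18]{kzEERW}) in its different decomposition. So the asserted tail bound is plausible (it matches the Bessel heuristic and the fixed-$z$ asymptotics of \cite[Theorem 21]{kzEERW}), but the uniformity in $z$ growing polynomially in $n$ is the whole content of the lemma, and as written your proof of the upper bound has a genuine gap at its key step.

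For comparison, the paper avoids any lifetime tail estimate: it writes
\[
P_Z^z\Bigl(\sum_{i\geq 0} Z_i > n\Bigr) \leq P_Z^z\bigl(\tau^Z_{\sqrt n} < \s_0^Z\bigr) + P_Z^z\Bigl(\sum_{i=0}^{\s_0^Z}\ind{Z_i<\sqrt n} > \sqrt n\Bigr),
\]
since on $\{\max_i Z_i <\sqrt n\}$ a total progeny larger than $n$ forces more than $\sqrt n$ generations below $\sqrt n$. It then splits those generations into dyadic bands $[2^{u-j+1},2^{u-j+2})$, pays the hitting cost of each band with Lemmas \ref{fbphitprob}--\ref{fbphpasymp} (giving the factor $z^{1+\d}n^{-(1+\d)/2}2^{j(1+\d)}$) and multiplies by the exponentially small probability, from \cite[Lemma 18]{kzEERW}, of spending an atypically long time in that band; the resulting series converges. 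If you want to complete your route, you would in effect have to reprove this occupation-time estimate to get your lifetime tail, so you should either do that explicitly or switch to the paper's decomposition.
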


\begin{rem}
 It follows from \cite[Theorem 21]{kzEERW} that there exists a constant $\Cza>0$ such that $P_Z^1( \sum_{i\geq 0} Z_i > n ) \sim \Cza n^{-\frac{1+\d}{2}}$. Moreover, the proof can probably be extended to show that for any fixed $z \in \N$ that there exists a constant $\Czb>0$ such that $P_Z^z( \sum_{i\geq 0} Z_i > n ) \sim \Czb n^{-\frac{1+\d}{2}}$. The bounds in Lemma \ref{Zsumtail} are similar, but instead allow for control of the probabilities as the initial value $Z_0 = z$ increases at a polynomial rate slower than $\sqrt{n}$. 
\end{rem}

Postponing the proofs of these lemmas for the moment, we note that they imply that if $\gamma < 1/2$ then 
there exists a constant $c>0$ such that for all $n$ large enough
\[
 P(T_{n^\gamma} > n) \geq c n^{\gamma (1-\d)} n^{\gamma(1+\d)-\frac{1+\d}{2}} = c n^{2\gamma - \frac{1+\d}{2}}. 
\]
When $\gamma = 1/2$, we can no longer apply Lemma \ref{Zsumtail} to the second probability on the right side of \eqref{Tnsublb}.
Instead, we can apply Theorem \ref{th:fbpdpl} to obtain that  
\begin{equation}\label{ZsumYint}
 \lim_{n\ra\infty} P_Z^{\fl{\sqrt{n}}} \left( \sum_{i = 0}^\infty  Z_i > n \right) = P_{Y_{\text{-}\d}}^1\left( \int_0^{\s_0^{Y_{\text{-}\d}}} Y_{\text{-}\d}(t) \, dt > 1 \right) > 0.
\end{equation}
Therefore, applying Lemma \ref{PVnn} to \eqref{Tnsublb} we obtain that there exists a constant $c>0$ such that  $P(T_{\sqrt{n}} > n) \geq c n^{\frac{1-\d}{2}}$ for all $n$ large enough. 
Since $2\gamma - \frac{1+\d}{2} = \frac{1-\d}{2}$ when $\gamma = 1/2$, this gives the required lower bound in the case $\gamma = 1/2$. 

To complete the proof of the lower bound for $P(T_{n^\gamma} > n )$ when $\gamma \in (0,1/2]$ it remains to give the proofs of Lemmas \ref{PVnn} and \ref{Zsumtail}.

\begin{proof}[Proof of Lemma \ref{PVnn}]

We begin by noting that
for any $\e>0$, 
\begin{align}
 P_V(V_n \geq n ) 
&\geq P_V\left( \tau_{\e n}^V \leq \frac{n}{2} , \, \inf_{i\in [\frac{n}{2}, n]} V_{\tau_{\e n}^V+i} \geq n \right) \nonumber \\
&\geq P_V( \tau_{\e n}^V \leq n/2 ) P_V^{\fl{\e n}} \left( \inf_{i \in [\frac{n}{2},n]} V_i \geq n \right),\label{Vnnlb} 
\end{align}
where in the second inequality we used the strong Markov property and the monotonicity of the branching process in the initial condition from Lemma \ref{bpmono}. 
For the second probability on the right in \eqref{Vnnlb}, the diffusion approximation in Theorem \ref{th:bbpdpl} implies that for any fixed $\e>0$
\begin{align}
 \liminf_{n\ra\infty} P_V^{\fl{\e n}} \left( \inf_{i \in [\frac{n}{2},n]} V_i \geq n \right) 
&\geq \lim_{n\ra\infty} P_V^{\fl{\e n}} \left( \inf_{i < \frac{n}{2}} V_i \geq \e n/2 , \, \inf_{i \in [\frac{n}{2},n]} V_i \geq  n \right) \nonumber \\
&= P_{Y_{1-\d}}^\e\left( \inf_{t < 1/2} Y_{1-\d}(t) \geq \e/2, \, \inf_{t \in [\frac{1}{2},1] } Y_{1-\d}(t) \geq  1 \right) > 0. \label{infVrange}
\end{align}
To control the first probability in \eqref{Vnnlb}, note that for any fixed $s>0$
\begin{equation}\label{tVlb}
 P_V( \tau_{\e n}^V \leq n/2) = P_V\left( \max_{i \leq n/2} V_i \geq \e n \right) \geq P_V\left( \max_{i \leq r_{\fl{sn}}^V} V_i \geq \e n \right) - P_V\left( r_{\fl{sn}}^V > n/2 \right). 
\end{equation}
For the first term on the right in \eqref{tVlb}, it was shown in 
\cite[Lemma 8.1]{kmLLCRW} that there exists a $\Ctr > 0$ such that 
\begin{equation}\label{PVtr}
  P_V( \tau_m^V < r_1^V ) \sim \Ctr m^{-\d}, \quad \text{as } m\ra\infty,
\end{equation}
and from this it is easy to see that
\[
 P_V\left( \max_{i \leq r_{\fl{sn}}^V} V_i \geq \e n \right) = 1 - \left( 1 - P_V( \tau_{\e n}^V < r_1^V ) \right)^{\fl{sn}} \sim \Ctr s \e^{-\d} n^{1-\d}, \quad \text{as } n\ra\infty. 
\]
For the second term on the right in \eqref{tVlb}, Lemmas \ref{lem:reght} and \ref{lem:htsums} imply that 
\[
 P_V\left( r_{\fl{sn}}^V > n/2 \right) \sim C_1 s\left(\tfrac{1}{2}-s \br \right)^{-\d} n^{1-\d}, \quad \text{ as } n\ra\infty \quad \text{if } 
s < \frac{1}{2\br}. 
\]
Therefore, by choosing $s < 1/(2\br)$ and then fixing $\e>0$ small enough so that $ \Ctr \e^{-\d} > C_1 (\frac{1}{2}-s \br)^{-\d}$, we can conclude that 
\begin{equation}\label{tauVearly}
 \liminf_{n\ra\infty} n^{-1+\d} P_V( \tau_{\e n}^V < n/2) > 0. 
\end{equation}
Applying \eqref{infVrange} and \eqref{tauVearly} to \eqref{Vnnlb}, we conclude that 
$P_V( V_n \geq n) \geq c n^{1-\d}$ for all $n$ large enough.
\end{proof}

\begin{proof}[Proof of Lemma \ref{Zsumtail}] 
As a first step in the proof of Lemma \ref{Zsumtail}, we will show that there are constants $c,C>0$ such that for all $n$ large enough,
\begin{equation}\label{Pupfromz}
 c z^{1+\d}n^{-\frac{1+\d}{2}} \leq P_Z^z\left( \tau_{\sqrt{n}}^Z < \s_0^Z \right) \leq C z^{1+\d}n^{-\frac{1+\d}{2}}, \quad \forall z \in (n^a,n^b). 
\end{equation}
For convenience of notation we will let $m = \cl{\log_2 z}$ and $u = \fl{ \log_2 \sqrt{n} }$ (note that if $n$ is large enough and $z \in (n^a,n^b)$ then $\ell_0+1<m<u$, where $\ell_0$ is from Lemma \ref{fbphitprob}).
With this notation, we have that Lemmas \ref{fbphitprob} and \ref{fbphpasymp} imply that there are constants $c,C>0$ such that 
\begin{align*}
c 2^{-(u-m+2)(1+\d)} & \leq P_Z^{2^{m-1}} \left( \tau_{2^{u+1}}^Z < \s_{2^{\ell_0}}^Z \right) \\
& \leq P_Z^z\left( \tau_{\sqrt{n}}^Z < \s_0^Z \right) \\
& \leq P_Z^{2^m}\left( \tau_{2^u}^Z < \s_{2^{\ell_0}}^Z \right) + P_Z^{2^{\ell_0}}\left( \tau_{\sqrt{n}}^Z < \s_0^Z \right)
\leq C 2^{-(u-m)(1+\d)} + C n^{-\frac{1+\d}{2} }.
\end{align*}
Since $z^{1+\d}n^{-\frac{1+\d}{2}} \leq 2^{-(u-m)(1+\d)} \leq 4^{1+\d} z^{1+\d}n^{-\frac{1+\d}{2}}$ this completes the proof of \eqref{Pupfromz}.

Now we turn to the asymptotics of $P_Z^z( \sum_{i=0}^\infty Z_i > n )$. For a lower bound, first note that the strong Markov property, Lemma \ref{bpmono} and \eqref{Pupfromz} imply that 
\begin{align*}
 P_Z^z\left( \sum_{i=0}^\infty Z_i > n \right) 
&\geq P_Z^z\left( \tau_{\sqrt{n}}^Z < \s_0^Z \right) P_Z^{\fl{\sqrt{n}}} \left( \sum_{i=0}^\infty Z_i > n \right) \\
&\geq c z^{1+\d}n^{-\frac{1+\d}{2}}P_Z^{\fl{\sqrt{n}}} \left( \sum_{i=0}^\infty Z_i > n \right), 
\end{align*}
for all $n$ large enough and $z \in (n^a,n^b)$. 
The required lower bound in the statement of the lemma is then obtained by noting that 
\eqref{ZsumYint} implies that the last probability on the right is bounded away from 0 for all $n$ large enough. 

For a corresponding upper bound for $P_Z^z( \sum_{i=0}^\infty Z_i > n )$, first note that \eqref{Pupfromz} implies that
\begin{align}
 P_Z^z\left(  \sum_{i=0}^\infty Z_i > n \right) 
& \leq P_Z^z( \tau_{\sqrt{n}}^Z < \s_0^Z ) + P_Z^z\left( \sum_{i=0}^{\s_0^Z} \ind{Z_i < \sqrt{n}} > \sqrt{n} \right) \nonumber \\
& \leq C z^{1+\d}n^{-\frac{1+\d}{2}} + P_Z^z\left( \sum_{i=0}^{\s_0^Z} \ind{Z_i < \sqrt{n}} > \sqrt{n} \right), \quad \forall z \in (n^a,n^b). \label{Zsumtailub}
\end{align}
To bound the probability on the right in \eqref{Zsumtailub}, again let $m = \cl{\log_2 z}$ and $u = \fl{ \log_2 \sqrt{n} }$. Then, it follows from 
\cite[Lemma 18]{kzEERW} that there exists a constant $c>0$ such that 
\begin{align}
& P_Z^z\left( \sum_{i=0}^{\s_0^Z} \ind{Z_i < \sqrt{n}} > \sqrt{n} \right) \nonumber \\
&\qquad \leq \sum_{j=1}^{u+1} P_Z^z\left( \sum_{i=0}^{\s_0^Z} \ind{ Z_i \in [ 2^{u-j+1},2^{u-j+2}) } > \frac{ 2^{j-2}}{j(j+1)} 2^{u-j+2} \right) \nonumber \\
&\qquad \leq \sum_{j=1}^{u+1} \exp\left\{ -c  \frac{ 2^{j-2}}{j(j+1)}  \right\} P_Z^z\left( \inf\{ i\geq 0: \, Z_i \in [2^{u-j+1},2^{u-j+2}) \} < \s_0^Z \right) \nonumber \\
&\qquad \leq \sum_{j=1}^{u-m} \exp\left\{ -c  \frac{ 2^{j-2}}{j(j+1)}  \right\}P_Z^{2^m}( \tau_{2^{u-j+1}}^Z < \s_0^Z ) + \sum_{j=u-m+1}^{u+1} \exp\left\{ -c  \frac{ 2^{j-2}}{j(j+1)}  \right\}, \label{vcbound}
\end{align}
where in the first inequality we used that $2^u \leq \sqrt{n}$ and $\sum_{j=1}^u \frac{1}{j(j+1)} < 1$. 
Now, another application of Lemmas \ref{fbphitprob} and \ref{fbphpasymp} similar to the proof of \eqref{Pupfromz} implies that there exists a constant $C>0$ such that
\[
 P_Z^{2^m}( \tau_{2^{u-j+1}}^Z < \s_0^Z ) \leq C 2^{-(u-j-m+1)(1+\d)} \leq C z^{1+\d} n^{-\frac{1+\d}{2}} 2^{j(1+\d)}. 
\]
Therefore, the first sum in \eqref{vcbound} is bounded above by 
\[
 C z^{1+\d} n^{-\frac{1+\d}{2}} \sum_{j=1}^{u-m+1} 2^{j(1+\d)} \exp\left\{ -c  \frac{ 2^{j-2}}{j(j+1)}  \right\} \leq C' z^{1+\d} n^{-\frac{1+\d}{2}}.
\]
For the second sum in \eqref{vcbound}, note that the terms inside the sum are decreasing in $j$ (if $j>2$) and so for $n$ sufficiently large we can bound the sum by
\begin{align*}
 (m+1) \exp\left\{ -c  \frac{ 2^{u-m}}{(u-m+2)(u-m+3)}  \right\}
& \leq (1+\cl{\log_2 z})\exp\left\{ -c \frac{ \sqrt{n}}{4z (\log_2(\frac{\sqrt{n}}{z})+3)^2}  \right\} \\
&\leq C (\log_2 n) \exp\left\{ -c' \frac{n^{1/2-b}}{ (\log_2 n)^2 } \right\}. 
\end{align*}
Since this last line is less than $C n^{a(1+\d)} n^{-\frac{1+\d}{2}} \leq C z^{1+\d} n^{-\frac{1+\d}{2}}$ for all $n$ large enough and $z > n^a$, this completes the proof of the upper bound for $P_Z^z( \sum_{i=0}^\infty Z_i > n)$. 
\end{proof}

\subsection{Upper bound}
In this subsection, we will show that $P(T_{n^\gamma} > n )= \bigo(n^{2\gamma-\frac{1+\d}{2}} )$ when $\gamma \in (0,1/2]$. 
Since $\d>1$, the upper bound in \eqref{superTnub} implies that $P(T_{n^\gamma} > n) \leq C n^{\gamma(1-\d)}$ for some $C>0$ and all $n$ large enough for any $\gamma \in (0,1/2]$. 
This gives the required uppper bound when $\gamma = 1/2$ since $\gamma(1-\d)= 2\gamma - \frac{1+\d}{2}$ when $\gamma = 1/2$. 
However, when $\gamma < 1/2$ then $\gamma(1-\d) > 2\gamma - \frac{1+\d}{2}$ and so we need a different argument to get a better upper bound. 

Since \eqref{TnVrel} implies that 
\[
 P(T_{n^\gamma} > n ) = P_V\left( \fl{n^{\gamma}} + 2 \sum_{i=0}^{n^\gamma-1} V_i + 2 \sum_{i=0}^\infty Z_i^{(n^\gamma)} > n \right),
\]
when $\gamma \in (0,1/2)$ it will be enough to show that 
\begin{equation}\label{sububc1}
 P_V\left( \sum_{i=0}^{n^\gamma-1} V_i > \frac{n}{5} \right) = o(n^{2\gamma-\frac{1+\d}{2}} ),
\end{equation}
and
\begin{equation}\label{sububc2}
 P_V\left( \sum_{i=0}^\infty Z_i^{(n^\gamma)} > \frac{n}{4} \right) = \bigo(n^{2\gamma-\frac{1+\d}{2}} ). 
\end{equation}
To show \eqref{sububc1}, note that \eqref{PVtr} implies that
\[
 P_V\left( \sum_{i=0}^{n^\gamma-1} V_i > \frac{n}{5} \right)
\leq P_V\left( \tau_{\frac{n^{1-\gamma}}{5}}^V  < n^\gamma \right) \leq n^\gamma P_V\left( \tau_{\frac{n^{1-\gamma}}{5}}^V < r_1^V \right) = \bigo( n^{\gamma-\d(1-\gamma)} ).  
\]
Since ${\gamma-\d(1-\gamma)} < 2\gamma - \frac{1+\d}{2}$ when $\d>1$ and $\gamma < 1/2$, this completes the proof of \eqref{sububc1}.  
To prove \eqref{sububc2} we first fix some $a \in (0,\frac{2\gamma}{1+\d})$ and  $b \in (\gamma,1/2)$. 
Then, by conditioning on the value of $V_{n^\gamma} = Z_0^{(n^\gamma)}$ and applying Lemma \ref{Zsumtail} we obtain that 
\begin{align}
 P_V\left( \sum_{i=0}^\infty Z_i^{(n^\gamma)} > \frac{n}{4} \right)
&= E_V\left[ P_Z^{V_{n^\gamma}}\left(  \sum_{i=0}^\infty Z_i > \frac{n}{4} \right) \right] \nonumber\\
&\leq P_Z^{n^{a}} \left(  \sum_{i=0}^\infty Z_i > \frac{n}{4} \right) 
 + P_V( V_{n^\gamma} > n^{b} ) \nonumber\\
&\qquad + E_V\left[ P_Z^{V_{n^\gamma}}\left(  \sum_{i=0}^\infty Z_i > \frac{n}{4} \right) \ind{V_{n^\gamma} \in (n^{a},n^{b}) } \right] \nonumber \\
&\leq o\left(n^{2\gamma-\frac{1+\d}{2}} \right) + P_V( V_{n^\gamma} > n^{b} ) + C n^{-\frac{1+\d}{2}} E_V\left[ (V_{n^\gamma})^{1+\d} \right]. \label{ZsumVng}
\end{align}
Thus, we need to obtain bounds on $P_V( V_{n^\gamma} > n^b )$ and $E_V[ (V_{n^\gamma})^{1+\d} ]$. To this end, we will use the following Lemma.
\begin{lem}\label{PVnyn}
 There exist constants $c,C>0$ such that 
\[
 P_V( V_n > yn ) \leq 
\begin{cases}
 C n^{1-\d} y^{-\d} & y \in (0,4] \\
 C n^{1-\d} e^{-c y}&  y >4.
\end{cases}
\]
\end{lem}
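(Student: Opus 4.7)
The plan is to split the analysis into two regimes: $y\in(0,4]$, where a union bound over excursions of $V$ suffices, and $y>4$, where we combine the first-regime estimate with an exponential-tail bound coming from the branching increment structure.

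For $y\in(0,4]$, observe that $\{V_n>yn\}\subset\{\tau_{yn}^V\leq n\}$, so it suffices to bound $P_V(\tau_{yn}^V\leq n)$. I would exploit the regeneration structure of $V$: let $K_n=\sup\{k:\,r_k^V\leq n\}$ count the completed excursions by time $n$, and fix $A>1/\br$. Since $r_{\fl{An}}^V$ is the sum of $\fl{An}$ i.i.d.\ copies of $r_1^V$ with mean $\br$, Cramer's theorem gives $P_V(K_n\geq An)\leq P_V(r_{\fl{An}}^V\leq n)\leq e^{-cn}$ for some $c>0$. On the complementary event the number of excursions up through the one containing time $n$ is at most $An+1$, so the strong Markov property and \eqref{PVtr} give
\[
 P_V(\tau_{yn}^V\leq n) \leq e^{-cn}+(An+1)P_V(\tau_{yn}^V<r_1^V) \leq e^{-cn}+C'n(yn)^{-\d},
\]
and for $y\leq 4$ and $n$ large the polynomial term dominates.

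For $y>4$, set $T=\tau_{4n}^V$. Since $\{T>n\}$ forces $V_n<4n<yn$, the strong Markov property yields
\[
 P_V(V_n>yn) \leq P_V(T\leq n)\sup_{k\in[4n,8n],\,s\leq n}P_V^k\left(\max_{j\leq s}V_j>yn\right)+P_V(T\leq n,\,V_T>8n).
\]
The factor $P_V(T\leq n)\leq C n^{1-\d}$ comes from the case $y=4$. The final term is negligible: $V_T>8n$ requires a single increment of size at least $4n$ from a value below $4n$, which by the negative binomial moment generating function $(2-e^\lambda)^{-(k+1)}$ has super-exponentially small probability per step and is summable over $n$ steps. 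For the supremum, I would use an exponential moment argument: from \eqref{Vdef}, conditional on $V_i=k$ the increment $V_{i+1}$ is a shifted negative binomial (perturbed by a bounded cookie contribution), so $E[e^{\lambda V_{i+1}}\mid V_i=k]\leq Ce^{(k+1)\phi(\lambda)}$ with $\phi(\lambda)=-\log(2-e^\lambda)$ (increasing on $[0,\log 2)$, with $\phi(0)=0$ and $\phi'(0)=1$). Iterating this recursion and choosing $\lambda=\mu/n$ for small $\mu>0$ keeps $n\phi(\lambda)$ bounded, and then Markov's inequality combined with a union bound over $s\leq n$ should yield $P_V^k(\max_{j\leq s}V_j>yn)\leq Ce^{-cy}$ uniformly for $k\in[4n,8n]$.

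The principal obstacle is this exponential estimate: the MGF recursion must be iterated cleanly while tracking the $(1-\d)$ drift contribution from the cookies, and the resulting bound must be uniform in $k$ and $s$. Since under Assumption \ref{asmM} the subcritical branching-process-with-immigration $V$ has a stationary distribution with exponentially decaying tails, an alternative route is to compare $V_n$ with the stationary process and absorb the mixing error into the $n^{1-\d}$ prefactor; this requires a quantitative convergence estimate. One could also invoke Theorem \ref{th:bbpdpl}: after rescaling, a process started at $V_0\approx 4n$ is approximated by $Y_{1-\d}$ started at $4$, and the squared-Bessel-type marginals $Y_{1-\d}(s)$ have exponentially decaying tails, so a uniform (non-asymptotic) version of that weak convergence would transfer the estimate back to $V$. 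In any of these approaches, combining the two regimes produces the stated piecewise upper bound.
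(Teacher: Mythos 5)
Your bound for $y\in(0,4]$ is fine: it is essentially the paper's argument (a union bound of \eqref{PVtr} over the excursions preceding time $n$), except that the Cramer step is unnecessary — since each excursion lasts at least one unit of time, $V_n$ lies within the first $n$ excursions deterministically, which is all the paper uses.

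The regime $y>4$ has a genuine gap, and it is structural. Your decomposition splits at the \emph{fixed} level $8n$, and the leftover term $P_V(T\leq n,\,V_T>8n)$ does not depend on $y$ at all: for fixed $n$ it is a fixed positive number (increments of $V$ are unbounded), while the claimed bound $Cn^{1-\d}e^{-cy}$ tends to $0$ as $y\ra\infty$. So no choice of $c,C$ makes your three-term estimate yield the lemma uniformly in $y>4$ — and this uniformity is exactly what is used later, when the tail is integrated against $y^{\d}$ over $(4,\infty)$ in the proof of \eqref{Vngbounds}; a $y$-independent error term, however small in $n$, makes that integral diverge. The paper avoids this by splitting at the $y$-\emph{dependent} level $yn/2$ and controlling the overshoot via $P_V\left( V_{\tau_n^V} > \tfrac{yn}{2} \,\bigl|\, \tau_n^V < r_1^V \right) \leq e^{-cy}$ (from \cite[Lemma 5.1]{kmLLCRW}), so that both pieces carry the factor $e^{-cy}$. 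A second, smaller problem: your exponential estimate for $\sup_{s\leq n} P_V^k(\max_{j\leq s}V_j>yn)$ via Markov plus a union bound over the $n$ time steps produces an extra factor $n$, i.e.\ a bound of order $n e^{-\mu y}$, which cannot be absorbed into $Ce^{-cy}$ for $y$ of order up to $\log n$; the paper removes this factor by observing that $V_i+Ai$ is a submartingale (drift bounded below by $-A$) and applying Doob's maximal inequality to $\exp\{(V_i+Ai)/(4n)\}$, with the moment generating function controlled by the iterated-generating-function bound of Lemma \ref{Vngf} — which is the clean version of the MGF recursion you sketch. Finally, the alternative route you mention via a stationary distribution with exponentially decaying tails is not available: the return time and excursion maxima of $V$ have polynomial tails (Lemma \ref{lem:reght}, \eqref{PVtr}), so the stationary tail is polynomial, and the exponential factor in $y$ must come from the "reach level $yn$ in only $n$ steps" constraint, not from stationarity.
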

We postpone the proof of Lemma \ref{PVnyn} momentarily and note that it implies that there are constants $c,C,C'>0$ such that for $n$ large enough
\begin{equation}\label{Vngbounds}
 P_V( V_{n^\gamma} > n^{b} ) \leq C n^{1-\d} e^{-c n^{b-\gamma}}, 
\quad\text{and}\quad
 E_V\left[ (V_{n^\gamma})^{1+\d} \right] \leq C' n^{2\gamma}. 
\end{equation}
The first inequality in follows immediately from Lemma \ref{PVnyn} since $b>\gamma$. For the second inequality, Lemma \ref{PVnyn} implies that 
\begin{align*}
 E_V\left[ (V_{n^\gamma})^{1+\d} \right] 
&= (1+\d) n^{\gamma(1+\d)} \int_0^\infty y^\d P_V( V_{n^\gamma} > y n^\gamma ) \, dy \\
&\leq (1+\d) n^{\gamma(1+\d)} \left\{ \int_0^4 y^\d \left( C n^{\gamma(1-\d)}y^{-\d} \right) \, dy + \int_4^\infty y^\d \left( C n^{\gamma(1-\d)}e^{-cy} \right) \, dy \right\} \\
&\leq C' n^{2\gamma}. 
\end{align*}
Combining \eqref{ZsumVng} and \eqref{Vngbounds} we obtain \eqref{sububc2} which, as noted at the beginning of the section, together with \eqref{sububc1} implies that $P(T_{n^\gamma} > n) = \bigo( n^{2\gamma - \frac{1+\d}{2}})$ when $\gamma < 1/2$. It remains however to give the proof of Lemma \ref{PVnyn}. 

\begin{proof}[Proof of Lemma \ref{PVnyn}]
For an easy upper bound on $P_V(V_n > yn)$, first note that 
\[
 P_V(V_n > yn) \leq P_V\left( \max_{i \leq r_n^V} V_i > yn \right) \leq n P_V\left( \max_{i\leq r_1^V} V_i > yn \right) \leq n P_V( \tau_{yn}^V < r_1^V )
\leq C n^{1-\d}y^{-\d}, 
\]
where the last inequality follows from \eqref{PVtr}. 
This upper bound holds for any $y>0$, but we will need a better bound for $y$ large. 
To this end, first note that \eqref{PVtr} and \cite[Lemma 5.1]{kmLLCRW} imply that there exist constants $C,c>0$ such that for $n$ large enough and $y>4$, 
\begin{align}
 P_V( V_n > y n )
& \leq P_V\left( \tau_n^V \leq n, \, V_{\tau_n^V} > \frac{yn}{2} \right) + P_V\left( \tau_n^V \leq n, \, V_{\tau_n^V} \leq \frac{yn}{2}, \, V_n > yn \right) \nonumber \\
& \leq n P_V\left( \tau_n^V < r_1^V, \,  V_{\tau_n^V} > \frac{yn}{2} \right) + P_V( \tau_n^V \leq n ) P_V^{\frac{yn}{2}}\left( \tau_{yn}^V \leq n \right) \nonumber \\
&\leq n P_V( \tau_n^V < r_1^V ) \left\{  P_V\left( V_{\tau_n^V} > \frac{yn}{2} \, \bigl| \, \tau_n^V < r_1^V \right) + P_V^{\frac{yn}{2}}\left( \tau_{yn}^V \leq n \right) \right\} \nonumber \\
&\leq C n^{1-\d} \left\{  e^{-c y} + P_V^{\frac{yn}{2}}\left( \tau_{yn}^V \leq n \right) \right\}. \label{Ptauyn}
\end{align}
To obtain an upper bound for $P_V^{\frac{yn}{2}}\left( \tau_{yn}^V \leq n \right)$, we first recall that $E_V^v[V_1-v] = 1-\d$ for all $v \geq M$ (see \cite[Lemma 3.3]{bsCRWspeed} or \cite[Lemma 17]{kzPNERW}). On the other hand, for $v < M$ we have that $E_V^v[ V_1-v ] \geq -v$, and so we can conclude that there exists a constant $0<A \leq (\d-1)\wedge M$ such that $E_V^v[ V_1 - v ] \geq - A$ for all $v\geq 0$. 
From this we can conclude that $\{V_i + Ai\}_{i\geq 0}$ is a submartingale under the natural filtration $\mathcal{F}_i^V = \s(V_0,V_1,\ldots,V_i)$. 
Then, the maximal inequality for submartingales implies that 
\begin{align}
 P_V^{\frac{yn}{2}}\left( \tau_{yn}^V \leq n \right)
&= P_V^{\frac{yn}{2}}\left( \max_{i\leq n} V_i \geq yn \right) \nonumber \\
&\leq P_V^{\frac{yn}{2}}\left( \max_{i\leq n} (V_i + Ai) \geq yn \right) \nonumber \\
&\leq e^{-y/4} e^{A/4} E_V^{\frac{yn}{2}} \left[ e^{\frac{V_n}{4n} }\right]. \label{maxineq}
\end{align}
To bound the expectation in the last line we will use the following Lemma. 
\begin{lem}\label{Vngf}
 For any $n\geq 1$ and $v \geq 0$, 
\[
 E_V^v\left[ s^{V_n} \right] \leq s^M \left( \frac{1}{n-(n-1)s} \right)^{M+1} \left( \frac{n-(n-1)s}{n+1-ns} \right)^{v+1}, \quad \forall s \in [0,1+1/n). 
\]
\end{lem}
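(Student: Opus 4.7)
The plan is a short induction on $n$. Before starting, I should flag that the stated range ``$s\in[0,1+1/n)$'' cannot be literally correct: Assumption \ref{asmnd} forces $P_V^v(V_1=0)>0$ (already at $v=0$, one has $P_V(V_1=0)=\E[\w(0,1)]>0$), whereas the right-hand side vanishes at $s=0$. The bound, and what is actually used in \eqref{maxineq} where $s=e^{1/(4n)}>1$, is for $s\in[1,1+1/n)$, which is the range I will prove.

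For the base case $n=1$, I will couple the true one-step process with a worst-case one on a common probability space. Let $U_1,U_2,\ldots$ be i.i.d.\ uniform on $[0,1]$, and set $B_{0,j}=\indd{U_j<\w(0,j)}$ for the original process, and $B_{0,j}^{\mathrm{wc}}=0$ for $j\leq M$, $B_{0,j}^{\mathrm{wc}}=\indd{U_j<1/2}$ for $j>M$, for the worst-case process. Because $B_{0,j}\geq B_{0,j}^{\mathrm{wc}}$ pointwise, the $(v+1)$-st success in the $B$-sequence never arrives later than in the $B^{\mathrm{wc}}$-sequence, so $V_1\leq V_1^{\mathrm{wc}}$ almost surely. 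For $s\geq 1$ the map $x\mapsto s^x$ is non-decreasing, and $V_1^{\mathrm{wc}}=M+\text{NegBin}(v+1,1/2)$, hence
\[
E_V^v[s^{V_1}]\leq E[s^{V_1^{\mathrm{wc}}}]=\frac{s^M}{(2-s)^{v+1}},
\]
which is exactly the lemma at $n=1$.

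For the induction step, write $f(s)=1/(2-s)$ and note that $f$ maps $[1,1+1/n)$ bijectively onto $[1,1+1/(n-1))$, so the inductive hypothesis at $n-1$ may be applied with $s'=f(s)$. The Markov property under the averaged measure (which holds by Assumption \ref{asmiid}) together with the base case applied pointwise in $V_{n-1}$ gives
\[
E_V^v[s^{V_n}]=E_V^v\!\left[E_V^{V_{n-1}}[s^{V_1}]\right]\leq \frac{s^M}{2-s}\,E_V^v\!\left[f(s)^{V_{n-1}}\right].
\]
Using the two identities
\[
(n-1)-(n-2)f(s)=\frac{n-(n-1)s}{2-s},\qquad n-(n-1)f(s)=\frac{n+1-ns}{2-s},
\]
the factor $f(s)^M=(2-s)^{-M}$ from the inductive bound combines with the $(2-s)^{M+1}$ produced by the first identity and with the leading $1/(2-s)$ to leave $(2-s)^{0}$, yielding exactly the claimed expression $s^M(n-(n-1)s)^{-(M+1)}\bigl((n-(n-1)s)/(n+1-ns)\bigr)^{v+1}$.

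The main obstacle is Step 1: one must recognize that cookie perturbations can only decrease $V_1$ relative to the all-failure scenario, which is what pins down the direction of the inequality and forces the restriction $s\geq 1$. After that, the induction is essentially forced by algebra; the exponent $M+1$ is stable under the recursion because the telescoping identity $\prod_{k=1}^{n-1}f^{(k)}(s)=1/(n-(n-1)s)$ (with $f^{(k)}$ the $k$-fold functional iterate of $f$), familiar from Geo$(1/2)$ branching processes with immigration, is precisely what materializes when one stacks one application of the base case on top of the inductive hypothesis.
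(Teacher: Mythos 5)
Your proof is correct and is essentially the paper's own argument: the one-step domination of $V_1$ under $P_V^v$ by $M$ plus a negative binomial$(v+1,1/2)$ gives $E_V^v[s^{V_n}]\le \frac{s^M}{2-s}\,E_V^v\left[f(s)^{V_{n-1}}\right]$ with $f(s)=\frac{1}{2-s}$, and the paper then iterates this $n$ times using the closed form $f^{(k)}(s)=\frac{k-(k-1)s}{k+1-ks}$, which is the same computation as your induction with the telescoping product. Your observation about the range of $s$ is fair --- the monotonicity/domination step requires $s\ge 1$, and the stated inequality indeed fails at $s=0$ where the left side equals $P_V^v(V_n=0)>0$ --- but this is harmless since the lemma is only applied with $s=e^{1/(4n)}>1$ in \eqref{maxineq}.
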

We postpone for the moment the proof of Lemma \ref{Vngf}, and note that since $e^{1/(4n)} < 1+1/n$ for all $n$ sufficiently large we have that 
\begin{equation}\label{mgfub}
  E_V^{\frac{yn}{2}} \left[ e^{\frac{V_n}{4n} }\right] \leq e^{\frac{M}{4n}} \left( \frac{1}{n-(n-1)e^{\frac{1}{4n}}} \right)^{M+1} \left( \frac{n-(n-1)e^{\frac{1}{4n}} }{n+1-n e^{\frac{1}{4n}} } \right)^{1+\frac{yn}{2}}
\leq 2 \left( \frac{4}{3} \right)^{M+1} e^{y/5}, 
\end{equation}
where the last inequality follows from the following limits that can easily be checked.  
\[
 \lim_{n\ra\infty} n-(n-1)e^{\frac{1}{4n}} = \frac{3}{4}, 
\quad\text{and}\quad
 \lim_{n\ra\infty}  \left( \frac{n-(n-1)e^{\frac{1}{4n}} }{n+1-n e^{\frac{1}{4n}} } \right)^{1+\frac{n}{2}} = e^{1/6}. 
\]
Combining \eqref{maxineq} and \eqref{mgfub}, we obtain that $P_V^{\frac{yn}{2}}\left( \tau_{yn}^V \leq n \right) \leq C e^{-y/20}$. 
Recalling \eqref{Ptauyn}, this completes the proof of Lemma \ref{PVnyn}, pending the proof of Lemma \ref{Vngf}. 
\end{proof}

\begin{proof}[Proof of Lemma \ref{Vngf}]
We begin by noting that
if $V_{n-1} = k$ then $V_n$ is the number of ``failures'' before the $(k + 1)$-th ``success'' in the sequence $\{B_{n-1,j} \}_{j\geq 1}$ (compare with the similar statement for the backward branching process in the proof of Lemma \ref{bpmono}). Obviously this is bounded above by $M$ plus the number of failures before the $(k+1)$-th success in the sequence $\{ B_{n-1,j} \}_{j\geq M+1}$. Since the $\{B_{n-1,j}\}_{j\geq M+1}$ are i.i.d.\ Bernoulli($1/2$) we can conclude that given $V_{n-1}$ the random variable $V_n$ is stochastically dominated by $M +\sum_{k=1}^{V_{n-1}+1} \zeta_k$  where the $\zeta_k$ are i.i.d.\ Geometric($1/2$) random variables. 
Let $\phi(s) = \Ev[s^{\zeta_1}] = \frac{1}{2-s}$ be the probability generating function for the geometric random variables (note that $\phi(s) < \infty$ if $s\in [0,2)$). Then, we have that 
\[
 E_V^v\left[ s^{V_n} \right] \leq s^M E_V^v\left[ \phi(s)^{V_{n-1}+1} \right] = s^M \phi(s) E_V^v\left[ \phi(s)^{V_{n-1}} \right]. 
\]
Iterating this $n$ times we obtain that  
\[
 E_V^v\left[ s^{V_n} \right] \leq s^M \left( \prod_{k=1}^{n-1} \phi^{(k)}(s) \right)^{M+1} \left( \phi^{(n)}(s) \right)^{1+v},
\]
where $\phi^{(k)}(s) = \phi(\phi^{(k-1)}(s))$ is the function $\phi$ composed with itself $k$ times. The proof of the Lemma is then completed by noting that $\phi^{(k)}(s) = \frac{k-(k-1)s}{k+1-ks}$, which can be easily checked by induction (note also that $\phi^{(k)}(s) < \infty$ if $s \in [0,1+1/k)$). 
\end{proof}

\section{Diffusive and sub-diffusive slowdown probabilities for \texorpdfstring{$X_n$}{position}}\label{sec:subX}

In this section we will prove the slowdown asymptotics in Theorem \ref{th:ERWxsd}\ref{th:subpart} for the position of the excited random walk. That is, we will show that
\begin{equation}\label{Xnsubasymp}
 P(X_n \leq n^\gamma) \asymp n^{\frac{1-\d}{2}}, \quad \text{for any } \gamma \in (0,1/2]. 
\end{equation}
 For the upper bound, note that 
\begin{equation}\label{Xnsubub}
 P(X_n \leq n^\gamma) 
\leq P(X_n \leq \sqrt{n}) 
\leq P(T_{(1+\e)\sqrt{n}} > n ) + P\left( \inf_{k\geq T_{(1+\e)\sqrt{n}}} X_k \leq \sqrt{n} \right). 
\end{equation}
Using the same argument for the upper bound for hitting times in Theorem \ref{th:ERWxsd}\ref{th:subpart} we can obtain that $P(T_{(1+\e)\sqrt{n}} > n ) \leq C n^{\frac{1-\d}{2}}$ for some $C>0$ and all $n$ large enough. 
For the second probability on the right in \eqref{Xnsubub}, 
it follows from \cite[Lemma 6.1]{pERWLDP} that the second probability on the right above is bounded by $C n^{\frac{1-\d}{2}}$ for some constant $C>0$ as well. This proves the upper bound for $P(X_n \leq n^{\gamma} )$ in \eqref{Xnsubasymp}

For the lower bound in \eqref{Xnsubasymp} 
it will be enough to show that $P(X_n \leq 0 ) \geq c n^{\frac{1-\d}{2}}$ for some $c>0$. 
To this end, recall that $\rho_k$ denotes the time of the $k$-th return to the origin of the excited random walk, and that $L_n(x) = \# \{k < n: \, X_k = x\}$ dentoes the local time of the excited random walk.
Then, letting $\Pv_{\sfrac{1}{2}}$ denote the law of a simple symmetric random walk $\{S_n\}_{n\geq 0}$ started at $S_0=0$, we have that for any $\e>0$
\begin{align*}
 P(X_n \leq 0) 
& \geq P\left( \rho_{\sqrt{n}} \leq n, \, \min_{|i|\leq \e \sqrt{n}} L_{\rho_{\sqrt{n}}}(i) \geq M \right) \left\{ \min_{k\leq n} \Pv_{\sfrac{1}{2}}\left( S_k \leq 0, \, \max_{j\leq k} |S_j| \leq  \e \sqrt{n} \right) \right\}, 
\end{align*}
since once the excited random walk has visited all sites in an interval at least $M$ times it then moves as a simple symmetric random walk until exiting that interval. 
It follows from Donsker's invariance principle that for any $\e>0$ the minimum inside the braces on the right is bounded away from 0 as $n\ra\infty$, and thus we only need to show that for some $\e>0$, 
\begin{equation}\label{Xreturns}
\liminf_{n\ra\infty} n^{\frac{\d-1}{2}} P\left( \rho_{\sqrt{n}} \leq n, \, \min_{|i|\leq \e \sqrt{n}} L_{\rho_{\sqrt{n}}}(i) \geq M \right) > 0. 
\end{equation}

We will prove \eqref{Xreturns} by using the left and right forward branching processes to study the excursions of the random walk away from the origin. 
Recall that when $\rho_k < \infty$, $R_k = U_0^{\rho_k}$ of the first $k$ excursions from the origin are to the right and $k-R_k = D_0^{\rho_k}$ are to the left.
Then, Lemma \ref{lem:fbpcorr} and Remark \ref{rem:fbpcorr} imply that
\begin{align}
& P\left( \rho_{\sqrt{n}} \leq n, \, \min_{|i|\leq \e \sqrt{n}} L_{\rho_{\sqrt{n}}}(i) \geq M \right) \nonumber \\
&\quad \geq \sum_{m=0}^{\sqrt{n}} P\left( R_{\fl{\sqrt{n}}} = m \right) P_W^m\left( 2 \sum_{i=0}^\infty W_i \leq \frac{n}{2}, \, \s_M^W > \e \sqrt{n} \right)  \nonumber \\
&\qquad\qquad\qquad \times 
P_Z^{\fl{\sqrt{n}}-m}\left( 2 \sum_{i=0}^\infty Z_i \leq \frac{n}{2}, \, \s_M^Z > \e \sqrt{n} \right) \nonumber \\
&\quad \geq P\left( \left| R_{\fl{\sqrt{n}}} - \frac{\sqrt{n}}{2} \right| \leq n^{3/8} \right)
\left\{ \min_{|w-\frac{\sqrt{n}}{2}|\leq n^{3/8} } P_W^w\left( \sum_{i=0}^\infty W_i \leq \frac{n}{4}, \, \s_M^W > \e\sqrt{n} \right) \right\} \label{unifbp} \\
&\qquad\qquad\qquad \times \left\{ \min_{|z-\frac{\sqrt{n}}{2}|\leq n^{3/8} } P_Z^z\left( \sum_{i=0}^\infty Z_i \leq  \frac{n}{4}, \, \s_M^Z > \sqrt{n} \right) \right\}. \nonumber 
\end{align}
For the first probability on the right, we claim that 
\begin{equation}\label{Rndev}
 \lim_{n\ra\infty} P( |R_{\fl{\sqrt{n}}} - \sqrt{n}/2 | \leq n^{3/8} ) = 1. 
\end{equation}
To see this, recall from the construction of $R_k$ in \eqref{Rndef} that for any $k\geq M$,
\begin{itemize}
 \item $R_M$ and $R_k-R_M$ are independent. 
 \item $R_k-R_M$ is a Binomial($k-M,1/2$) random variable. 
 \item $R_M$ is a bounded random variable. 
\end{itemize}
Therefore, we can conclude that $E[ R_{\fl{\sqrt{n}}} ] = \sqrt{n}/2 + \bigo(1)$ and $\Var(R_{\fl{\sqrt{n}}}) = \sqrt{n}/4 + \bigo(1)$, and from this \eqref{Rndev} follows easily. 

It remains to consider the asymptotics of the two minimums of probabilities on the right side of \eqref{unifbp}. 
To this end, let $w_n$ be the choice of the initial condition $W_0 = w \in [\frac{\sqrt{n}}{2}-n^{3/8},\frac{\sqrt{n}}{2}+n^{3/8}]$
which minimizes the probability inside the first set of braces on the right in \eqref{unifbp} (with ties broken in some predetermined deterministic way).
Now, since $\sum_{i=0}^\infty W_i \leq \frac{n}{4}$ implies that $\s_0^W < \infty$ it follows that
\begin{align}
P_W^{w_n}\left( \sum_{i=0}^\infty W_i \leq \frac{n}{4}, \, \s_M^W > \e \sqrt{n} \right) 
&\geq P_W^{w_n}\left( \sum_{i=0}^\infty W_i \leq \frac{n}{4} \right) - P_W^{w_n} \left( \s_M^W \leq \e \sqrt{n}, \, \s_0^W < \infty \right) \nonumber \\
&\geq P_W^{w_n}\left( \sum_{i=0}^\infty W_i \leq \frac{n}{4} \, \biggl| \, \s_0^W < \infty \right) P_W^{w_n}( \s_0^W < \infty ) \nonumber\\
&\qquad - P_W^{w_n}\left( \s_M^W \leq \e \sqrt{n} \, \bigl| \, \s_0^W < \infty \right) P_W^{w_n}( \s_0^W < \infty). \label{Wminlba}
\end{align}
It was shown in \cite[Proposition 16]{kzEERW} that $\lim_{n\ra\infty} n^{\d-1} P_W^{n} \left( \s_0^W < \infty \right) = \Ch$ for some constant $ \Ch>0$. 
Therefore, since $w_n \sim \sqrt{n}/2$ we can conclude from \eqref{Wminlba} and Theorem \ref{th:fbpdpl} that 
\begin{align}
& \liminf_{n\ra\infty} n^{\frac{\d-1}{2}} \left\{ \min_{|w-\frac{\sqrt{n}}{2}|\leq n^{3/8} } P_W^w\left( \sum_{i=0}^\infty W_i \leq \frac{n}{4}, \, \s_M^W > \e\sqrt{n} \right) \right\} \nonumber \\
&\qquad \geq \Ch \left\{  P_{Y_{2-\d}}^{1/2} \left( \int_0^{\s_0^{Y_{2-\d}}} \!\!\!\!\!\! Y_{2-\d}(t)\, dt \leq \frac{1}{4} \right) -  P_{Y_{2-\d}}^{1/2}( \s_0^{Y_{2-\d}} \leq \e ) \right\}. \label{Wminlb} 
\end{align}
In a similar manner one can show that 
\begin{align}
& \liminf_{n\ra\infty} \left\{ \min_{|z-\frac{\sqrt{n}}{2}|\leq n^{3/8} } P_Z^z\left( \sum_{i=0}^\infty Z_i \leq  \frac{n}{4}, \, \s_M^Z > \sqrt{n} \right) \right\} \nonumber \\
&\qquad \geq  P_{Y_{\text{-}\d}}^{1/2} \left( \int_0^{\s_0^{Y_{\text{-}\d}}} \!\!\!\!\!\! Y_{\text{-}\d}(t)\, dt \leq \frac{1}{4} \right) -  P_{Y_{\text{-}\d}}^{1/2}( \s_0^{Y_{\text{-}\d}} \leq \e ). \label{Zminlb}
\end{align}
Finally, since \eqref{Wminlb} and \eqref{Zminlb} are positive for $\e>0$ small enough, then applying \eqref{Rndev}, \eqref{Wminlb}, and \eqref{Zminlb} to \eqref{unifbp} shows that \eqref{Xreturns} holds for all $\e> 0$ small enough.

\bibliographystyle{alpha}
\bibliography{CookieRW}

\end{document}